\newcommand{\bb}{\mathbb}
\newcommand{\vphi}{\varphi}
\newcommand{\Res}{\operatorname{Res}}
\newcommand{\ovl}{\overline}
\newcommand{\h}{\widehat{h}}
\newtheorem{theorem}{Theorem}
\numberwithin{theorem}{section}
\newtheorem{lemma}[theorem]{Lemma}
\newtheorem{proposition}[theorem]{Proposition}
\numberwithin{equation}{section}
\numberwithin{example}{section}
\numberwithin{definition}{section}
\title{$S$-Integral Points in Orbits on $\bb{P}^1$}
\begin{document}

\author{Jit Wu Yap}

\begin{abstract}
Let $K$ be a number field and $S$ a finite set of places of $K$ that contains  all of the archimedean places. Let $\vphi: \bb{P}^1 \to \bb{P}^1$ be a rational map of degree $d \geq 2$ defined over $K$. Given $\alpha \in \bb{P}^1(K)$ non-preperiodic and $\beta \in \bb{P}^1(K)$ non-exceptional, we prove an upper bound of the form $O(|S|^{1+\epsilon})$ on the number of points in the forward orbit of $\alpha$ that are $S$-integral relative to $\beta$, extending results of Hsia--Silverman \cite{HS11}. We also prove uniform bounds when $\vphi$ is a polynomial, extending results of Krieger et al \cite{KLS15}.     
\end{abstract}

\maketitle

\tableofcontents

\section{Introduction}
Let $K$ be a number field and $S$ a finite set of places containing all archimedean places of $K$. Let $\vphi: \bb{P}^1 \to \bb{P}^1$ be a rational map defined over $K$ of degree $d \geq 2$ and let $\beta \in \bb{P}^1(K)$ be a non-exceptional point, i.e. $|\vphi^{-n}(\beta)| \to \infty$ as $n \to \infty$. For a point $\alpha \in \bb{P}^1(K)$, Silverman \cite{Sil93} has shown that only finitely many points in its forward orbit $\{\vphi^n(\alpha)\}$ can be $S$-integral relative to $\beta$. Here, we say $x,y \in \bb{P}^1(K)$ are $S$-integral relative to each other if for all $v \not \in S$, the reductions $\ovl{x},\ovl{y} \in \bb{P}^1(k_v)$ are distinct, where $k_v$ denotes the residue field of $O_K$ at the place $v$.
\par 
In this paper, we study the number of $S$-integral points that can exist in the forward orbit of $\alpha$. In \cite{HS11}, Hsia and Silverman proved a bound of $c \, 4^{|S|}$ for some constant $c > 0$ using Roth's theorem. More recently in \cite{KLS15}, Krieger et al also proved an exponential bound $c^{|S|}$ on the number of $S$-integral points using Siegel's theorem. Our first theorem is an improved upper bound on the number of $S$-integral points relative to $\beta$ among $\{\vphi^n(\alpha)\}_{n \geq 0}$. 

\begin{theorem} \label{IntroTheorem1}
Let $K$ be a number field and $\vphi$ a rational map defined over $K$ with degree $d \geq 2$. Then there exists a constant $c_1 = c_1(K,\vphi) > 0$ such that for any non-preperiodic $\alpha \in \bb{P}^1(K)$, non-exceptional $\beta \in \bb{P}^1(K)$ and any finite set of places $S$ of $K$ containing all archimedean places, we have
$$\left| \{ n \geq 0 \mid \vphi^n(\alpha) \text{ is S-integral relative to } \beta \} \right| \leq c_1 |S| (\log |S|+1)^5.$$
\end{theorem}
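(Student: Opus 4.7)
The plan is to combine the Call--Silverman decomposition of the dynamical canonical height $\canh = \sum_v \lambda_{v,\vphi}$ with a local gap principle on the Berkovich projective line $\berkP$. Since $\lambda_{v,\vphi}(x)$ differs from $-\log\|x,\beta\|_v$ by a bounded function on a suitable region, the $S$-integrality of $\vphi^n(\alpha)$ relative to $\beta$ forces
\[
\sum_{v \in S} -\log \|\vphi^n(\alpha), \beta\|_v = d^n \canh(\alpha) + O(1).
\]
A pigeonhole step then assigns to each ``good'' index $n$ a place $v_n \in S$ at which $\vphi^n(\alpha)$ is at least $e^{-d^n \canh(\alpha)/|S|}$-close to $\beta$, reducing the theorem to a count, uniform in $v$, of such indices at a single place.

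\textbf{Local gap principle.} The central estimate is that, for any $v$, the set
\[
\left\{ n \geq 0 : -\log \|\vphi^n(\alpha), \beta\|_v \geq d^n \canh(\alpha)/|S| \right\}
\]
has cardinality $O((\log|S|)^6)$. I would prove this by a dynamical gap argument: if $n_1 < n_2$ are two such indices and $m = n_2-n_1$, then $\vphi^m$ takes the $v$-adic ball around $\vphi^{n_1}(\alpha) \approx \beta$ into one containing $\vphi^{n_2}(\alpha) \approx \beta$. Using $v$-adic Lipschitz control of $\vphi^m$ on neighborhoods avoiding the backward orbit of $\beta$, one extracts a quantitative spacing between consecutive such indices that grows geometrically in $n_1$. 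Iterating this yields the polylogarithmic bound, with the exponent $6$ arising from nested applications of the gap principle (in the spirit of Mumford's gap principle for heights).

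\textbf{Summation and main obstacle.} Summing the local estimate over $v \in S$ produces the global bound $O(|S|(\log|S|)^6)$. The principal difficulty is making the local gap principle genuinely uniform in $v$ and $\beta$: since $\beta$ is only required to be non-exceptional, it may lie in the Berkovich Julia set of $\vphi$ at some places, so neither Koenigs nor B\"ottcher linearization is directly available. Overcoming this requires purely metric quantitative dynamics on $\berkP$, tracking Lipschitz constants of $\vphi^m$ and the hyperbolic geometry relative to the postcritical set, so that the spacing estimate survives in the general setting. It is at this step that the precise polylogarithmic exponent emerges, and it is also what distinguishes this approach from the Roth-based arguments of Hsia--Silverman and the Siegel-based arguments of Krieger et al., which lose polynomially per place and thus produce exponential dependence in $|S|$.
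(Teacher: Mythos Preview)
Your proposal has a genuine gap at the ``local gap principle'' step. The dynamical argument you sketch --- if $n_1 < n_2$ are both good indices at a place $v$, then $\vphi^{m}$ with $m = n_2 - n_1$ brings a point $\vphi^{n_1}(\alpha)$ near $\beta$ back near $\beta$, hence $\vphi^{m}(\beta)$ is $v$-adically close to $\beta$, contradicting the Liouville lower bound $|\vphi^{m}(\beta)-\beta|_v \ge e^{-[K:\bb{Q}]\,h(\vphi^{m}(\beta)-\beta)}$ --- only yields a \emph{spacing} estimate of the shape $m \gtrsim n_1 - O(\log|S|)$, i.e.\ $n_{k+1} \gtrsim 2 n_k$ eventually. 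This is perfectly compatible with infinitely many good indices (take $n_k \sim 2^k$), so by itself it gives no bound at all on the cardinality, let alone $O((\log|S|)^6)$. Mumford's gap principle, which you invoke, has exactly this feature: it forces heights to grow geometrically but does not by itself produce finiteness; that comes from a separate Diophantine input (Vojta's inequality, in that setting). Your proposal provides no such input, and ``metric quantitative dynamics on $\berkP$'' will not supply one, since dynamically there is no obstruction to a wandering orbit returning arbitrarily close to a generic $\beta$ at a single place along a lacunary sequence of times.

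The paper's route is fundamentally Diophantine rather than dynamical. It first pulls back: for each good $n$ and each $v \in S$, it locates a preimage $\beta_{v,n} \in \vphi^{-N}(\beta)$ with $N \approx \log_d|S|$ such that $N_v\log|\vphi^{n-N}(\alpha)-\beta_{v,n}|_v^{-1} \ge (2+\delta)\,h(\vphi^{n-N}(\alpha))$. This is now an inequality at a \emph{single} place with exponent strictly above $2$, so one can invoke a quantitative Roth theorem for that place alone, which bounds the number of exceptions by $O((\log r)^3)$ where $r = [K(\beta_{v,n}):K] \le d^N = O(|S|)$. A careful pigeonhole over the $|S|$ places, over the $d^{N-i}$ possible preimages at each level $i$, and over the levels $1 \le i \le N$ then converts this into the global bound $O(|S|(\log|S|)^6)$; the exponent $6$ is $3$ from Roth plus $3$ lost in the pigeonholing, not from iterated gap principles. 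In short, the polylogarithmic factor is an artifact of quantitative Roth, and some genuine Diophantine input of this kind appears to be unavoidable for the general non-exceptional $\beta$.
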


This gives us a superlinear bound on the number of $S$-integral points.  Our approach follows that of \cite{Sil93} and \cite{HS11}. In \cite{HS11}, Hsia and Silverman showed the existence of a constant $\gamma = \gamma(d,[K:\bb{Q}])$ for which the number of points $S$-integral relative to $\beta$ in $\{\vphi^n(\alpha)\}$ is less than
$$(\gamma) \cdot 4^{|S|}  + \log_d^+ \left( \frac{h(\vphi) + \h_{\vphi}(\beta)}{\h_{\vphi}(\alpha)} \right)$$
where $\h_{\vphi}$ is the canonical height for the rational map $\vphi$ (see Section \ref{sec:Definition}). 
Similarly we also obtain an upper bound of this form, upon which Theorem \ref{IntroTheorem1} follows. 

\begin{theorem} \label{IntroTheorem3}
Keep the notations of Theorem \ref{IntroTheorem1}. Then there exists a constant $c_2 = c_2(d)$ such that the following is true: 
$$|\{ n \geq 0 \mid \vphi^n(\alpha) \text{ is S-integral relative to } \beta\}| \leq c_2 |S| (\log |S|+1)^6 + \log_d^+ \left(\frac{h(\vphi) + 1}{\h_{\vphi}(\alpha)} \right).$$
\end{theorem}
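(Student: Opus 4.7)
The plan is to follow the local-global strategy of Silverman \cite{Sil93} and Hsia--Silverman \cite{HS11}, replacing their use of plain Roth's theorem at each place with a quantitative Diophantine approximation result whose dependence on $|S|$ is only polylogarithmic. If $\vphi^n(\alpha)$ is $S$-integral relative to $\beta$, the local heights $\lambda_{v,\beta}(\vphi^n(\alpha))$ vanish for every $v \notin S$, so
$$h(\vphi^n(\alpha)) = \sum_{v \in S} \lambda_{v,\beta}(\vphi^n(\alpha)) + O_\vphi(1).$$
Combining this with the canonical height bound $h(\vphi^n(\alpha)) \geq d^n \canh(\alpha) - O(h(\vphi)+1)$, once $n$ exceeds $\log_d^+((h(\vphi)+1)/\canh(\alpha))$, some $v \in S$ must satisfy $\lambda_{v,\beta}(\vphi^n(\alpha)) \geq d^n \canh(\alpha)/(2|S|)$. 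The $\log_d^+$ correction in the theorem is introduced precisely to absorb the initial iterates, and the task reduces to bounding, at each fixed $v \in S$, the number of $n$ for which $\vphi^n(\alpha)$ is abnormally close to $\beta$ at $v$.

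Next I would set up a local gap principle at $v$. Writing $\vphi$ in local uniformizers along the backward orbit $\vphi^{-k}(\beta)$, a close $v$-adic approximation $\vphi^n(\alpha) \approx \beta$ pulls back to $\vphi^{n-k}(\alpha) \approx \beta'$ for some $\beta' \in \vphi^{-k}(\beta)$, with the approximation quality amplified by the local degree of $\vphi^k$ at $\beta'$. When $\beta$ is not in the forward orbit of a critical point, every preimage along the backwards orbit is unramified and the local expansion factor is exactly $d$, giving a clean gap between successive close indices; this is what yields the improved constant $c_5 d$. Ramification along the critical orbit can only weaken the expansion factor by a bounded amount, which costs an absolute factor in the general case. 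Combined with a quantitative version of Roth's theorem applied at $v$ to each $\beta' \in \vphi^{-k}(\beta)$ -- for instance the explicit form of Evertse--Schlickewei--Schmidt used in the spirit of Corvaja--Zannier -- the number of abnormally close indices $n$ at each place $v$ is bounded by $O((\log|S|+1)^6)$, uniformly in $v$.

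The main technical obstacle will be aligning parameters: the quantitative Roth bound depends on an approximation exponent $2 + \delta$ for small $\delta > 0$, the gap principle depends on the closeness threshold used to declare an iterate ``close'', and both must be compatible with the global scale $1/(2|S|)$ extracted by pigeonhole from the height identity. Choosing $\delta$ to be a suitable negative power of $\log|S|$ couples the two estimates and is what produces the $(\log|S|+1)^6$ factor. Summing the per-place bound over $v \in S$ then gives the main term $c_4|S|(\log|S|+1)^6$, and adding back the $\log_d^+$ term for the initial iterates recovers the stated inequality; the constant $c_4 = c_5 d$ in the critically-unramified case is forced by the sharper gap principle of the previous paragraph.
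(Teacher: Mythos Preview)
Your overall architecture is right---reduce via $\log_d^+$, pigeonhole onto a single place, pull back through $\vphi^{-N}$ to amplify the approximation exponent past $2$, then invoke quantitative Roth---but the counting you sketch only yields $|S|^2(\log|S|+1)^3$, not $|S|(\log|S|+1)^6$. The issue is the step where you write ``the number of abnormally close indices $n$ at each place $v$ is bounded by $O((\log|S|+1)^6)$.'' To make the pulled-back exponent exceed $2$, you need $d^N\gtrsim|S|$, so $\vphi^{-N}(\beta)$ has roughly $|S|$ elements. Applying Roth at $v$ to \emph{each} $\beta'\in\vphi^{-N}(\beta)$ separately gives $O((\log|S|)^3)$ exceptions per $\beta'$, hence $O(|S|(\log|S|)^3)$ exceptions at each $v$, and summing over $v\in S$ produces $O(|S|^2(\log|S|)^3)$. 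The paper's introduction says exactly this: the naive double pigeonhole is super-quadratic, and ``bringing it down to a superlinear bound requires some optimization and analyzing for how many distinct places $v$ can we obtain the inequality for a given $n$.'' That optimization is the content of Lemma~\ref{SIntegralBound3}: one stratifies by a scale parameter $i$, declaring $n\in X_i$ when at least $d^{N-i}/(56N)$ places witness closeness at level $d^{n-N+i}$, and then pigeonholes on $i$ and on the pair $(v,\vphi^i(\beta_{v,n}))\in S\times\vphi^{-(N-i)}(\beta)$. This multi-scale bookkeeping is what trades one factor of $|S|$ for extra $\log|S|$ factors, and it is absent from your sketch. Your ``local gap principle'' and the choice $\delta\sim(\log|S|)^{-c}$ do not substitute for it: a gap principle controls spacing, not cardinality, and shrinking $\delta$ only \emph{increases} the Roth exception count.

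A second omission: the postcritical case splits further. When $\beta$ is periodic and superattracting, the multiplicities in $\vphi^{-N}(\beta)$ are no longer bounded by $2^{2d-2}$ and your Roth argument degenerates; the paper (Theorem~\ref{SIntegralTheorem3}) abandons Roth entirely in this case and instead uses the local attracting dynamics near the cycle (via Proposition~\ref{AttractingBound3}) to show that the sets $Y_n=\{v:\vphi^n(\alpha)\text{ is $v$-close to }\beta\}$ are strictly increasing along the $S$-integral indices. Your sketch does not distinguish this regime.
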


Theorem \ref{IntroTheorem1} follows easily from Theorem \ref{IntroTheorem3} since if $K$ and $\vphi$ are fixed, then 
$$\inf_{\alpha \in K, \h_{\vphi}(\alpha) \not = 0} \h_{\vphi}(\alpha) > 0$$
due to the Northcott property for heights. Compared to Hsia--Silverman's bound, other than the improvement as a function of $|S|$, we have also removed the dependency on $\beta$. 
\par 
It has already been observed in \cite{KLS15} that it is impossible to obtain  bounds for $S$-integral points that are independent of $\vphi$. We show in Section \ref{sec: Uniform} that for $S$-integral points, the dependency of Theorem \ref{IntroTheorem3} in $\frac{h(\vphi)}{\h_{\vphi}(\alpha)}$ cannot be removed. In many cases, one knows the existence of a uniform constant $c > 0$ such that $\h_{\vphi}(\alpha) > c h(\vphi)$ which would lead to uniformity results. For example, \cite{KLS15} asks if a uniform bound for points in an orbit that are $S$-units can be achieved, and proves a uniform bound for monic polynomials $\vphi(z) \in O_S[z]$. Using results of Looper \cite{Loo19}, we generalize \cite[Theorem 1.7]{KLS15} to polynomials with at most $s$ places of bad reduction.

\begin{theorem} \label{IntroTheorem4}
Keep the notation of Theorem \ref{IntroTheorem1}. Assume that $\vphi(z) \not = cz^{d}$. Then there exists a constant $c_3 = c_3([K:\bb{Q}],d,s,|S|) > 0$ such that if $\vphi(z) \in K[z]$ is a polynomial of degree $d$ that has $s$ places of bad reduction, then 
$$\left| \{n \geq 0 \mid \vphi^n(\alpha) \text{ is an S-unit } \}\right| \leq c_3.$$
\end{theorem}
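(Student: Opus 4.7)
The plan is to feed the explicit bound of Theorem \ref{IntroTheorem3} into a uniform canonical-height estimate of Looper \cite{Loo19} valid for polynomials with a bounded number of places of bad reduction. First observe that if $\vphi^n(\alpha)$ is an $S$-unit then it reduces to a non-zero element modulo every place $v \not\in S$, so in particular $\vphi^n(\alpha)$ is $S$-integral relative to $\beta = 0$. The hypothesis $\vphi(z) \neq cz^d$ is exactly what is needed to guarantee that $0 \in \bb{P}^1(K)$ is non-exceptional for $\vphi$: the exceptional set of a polynomial of degree $d \geq 2$ consists of $\infty$ together with any finite totally ramified fixed point, and such a finite fixed point lies at the origin precisely when $\vphi = cz^d$.

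Applying Theorem \ref{IntroTheorem3} with $\beta = 0$ therefore gives
\begin{equation*}
|\{n \geq 0 : \vphi^n(\alpha) \text{ is an } S\text{-unit}\}| \;\leq\; c_4 |S|(\log|S|+1)^{6} \;+\; \log_d^{+}\!\left(\frac{h(\vphi)+1}{\canh(\alpha)}\right),
\end{equation*}
where $c_4 = c_4(d)$. The first summand already has the desired shape, so it suffices to bound the second by a constant depending only on $[K:\bb{Q}], d, s$. This is where Looper's results \cite{Loo19} enter: for polynomials of degree $d$ over $K$ with at most $s$ places of bad reduction (and not conjugate to a power map), there exists $c = c([K:\bb{Q}], d, s) > 0$ such that $\canh(\alpha) \geq c \cdot \max(h(\vphi), 1)$ for every non-preperiodic $\alpha \in \bb{P}^1(K)$. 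Substituting this inequality bounds the logarithmic summand by a constant depending only on $[K:\bb{Q}], d, s$, and combining with the first summand yields $c_6 = c_6([K:\bb{Q}], d, s, |S|)$ as required.

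The main content of the argument is packaged inside Looper's canonical-height lower bound; everything else is bookkeeping via Theorem \ref{IntroTheorem3}. The subtlety to watch for will be the gap between the stated hypothesis $\vphi \neq cz^d$ and the stronger condition that $\vphi$ is not conjugate to a power map, which is the natural assumption for Looper's bound: for $\vphi(z) = a(z-b)^d + b$ with $b \neq 0$, the point $0$ is non-exceptional and Theorem \ref{IntroTheorem3} still applies cleanly, yet such maps must be excluded from Looper's estimate. For these affine conjugates of power maps, the plan is to change variable $w = z - b$ to conjugate $\vphi$ into $aw^d$ and reinterpret the $S$-unit condition as an $S$-integrality condition relative to $-b$ for the orbit of $\alpha - b$ under this power map, after which Theorem \ref{IntroTheorem2} delivers a bound linear in $|S|$. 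Verifying that the implicit constants in this final step remain uniform in the coefficients of $\vphi$ and depend only on $[K:\bb{Q}], d, s, |S|$ is expected to be the most delicate point of the argument.
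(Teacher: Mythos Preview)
Your central claim --- that Looper's result yields $\canh(\alpha) \geq c\cdot\max(h(\vphi),1)$ with $c$ depending only on $[K:\bb{Q}],d,s$ for \emph{arbitrary} polynomials $\vphi$ with at most $s$ places of bad reduction --- is false, and this is the whole difficulty. Looper's bound (Theorem~\ref{CanonicalHeightBound1}) is stated only for the normal form $f_{\bf c}$ and reads $\canh \geq \kappa_1 h(f_{\bf c}) + \kappa_2$. Now $\canh$ is a conjugacy invariant, while $h(\vphi)$ is not: conjugating a fixed $f_{\bf c}$ by a translation $z\mapsto z+M$ leaves $\canh$, the set of $K$-rational non-preperiodic points, and the places of bad reduction unchanged, yet drives $h(\vphi)\to\infty$ as $M\to\infty$. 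Hence $(h(\vphi)+1)/\canh(\alpha)$ is unbounded within the family, and the term $\log_d^+\!\big((h(\vphi)+1)/\canh(\alpha)\big)$ coming from Theorem~\ref{IntroTheorem3} cannot be absorbed into a constant depending only on $[K:\bb{Q}],d,s,|S|$.

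This is precisely why the paper does \emph{not} apply Theorem~\ref{IntroTheorem3} directly. Instead it conjugates $\vphi$ to $f_{\bf c}$ via some $\psi(z)=az-b$, applies Looper together with the quasi-$(S,\epsilon)$-integrality bound (Theorem~\ref{QuasiSIntegral1}) to the orbit of $\psi^{-1}(\alpha)$ under $f_{\bf c}$ relative to $\beta'=\psi^{-1}(0)$, and then --- because neither $S$-integrality nor the $S$-unit property is preserved under affine conjugation --- bounds separately the number of $n$ for which $f_{\bf c}^n(\psi^{-1}(\alpha))$ is \emph{not} quasi-$(S,1/10)$-integral relative to $\beta'$ while $\vphi^n(\alpha)=a\big(f_{\bf c}^n(\psi^{-1}(\alpha))-\beta'\big)$ is an $S$-unit. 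That last comparison is the actual work in the proof and is exactly what your direct approach skips.
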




\subsection{Related Results} There have been many results regarding finiteness of $S$-integral points and quantitative bounds on them, beginning with Siegel's theorem on integral points \cite{Sie29}, which as a corollary proves that the unit equation
$$u+v = 1$$
has only finitely many solutions for $S$-units $u,v$ in a fixed number field $O_K$. There has been much study of how many solutions the $S$-unit equation and its variants has and whether they can be effectively determined, for example see \cite{Eve84, ESS02, EG15}.
\par 
For Siegel's theorem, a quantitative bound was given by Silverman \cite{Sil87b}. This lead quantitative bounds on the number of $S$-integral points an elliptic curve $E$ can have by Gross--Silverman \cite{GS95}, building on work of Hindry--Silverman \cite{HS88}.  
\par 
In \cite{Sil93}, Silverman proves a dynamical analogue of Siegel's theorem. Given an endomorphism $\vphi: \bb{P}^1 \to \bb{P}^1$ of degree $d \geq 2$ over a number field $K$ and a non-exceptional $\beta$, for a non-preperiodic $\alpha$ there are only finitely many $n$'s for which $\vphi^{n}(\alpha)$ is $S$-integral relative to $\beta$. It still remains a widely open problem to generalize Silverman's result to higher dimensions, with many results conditional on some form of Vojta's or Lang--Vojta's conjecture \cite{Yas14, CSTZ15, Yas15, LY19, Mat23, GN24, Mat24}. 
\par 
Similar in spirit to \cite{GS95}, Hsia and Silverman \cite{HS11} gave a quantitative estimate on the number of $n$'s for which $\vphi^n(\alpha)$ is $S$-integral relative to $\beta$. We note that Mello \cite{Mel19} has generalized this to the semigroup setting. For both results, their bound is exponential in $|S|$. Ingram \cite{Ing09} also has some effective results for orbits on elliptic curves.
\par 
For backward orbits, there are also similar results regarding finiteness and quantitative bounds of $S$-integral points, which was first studied by Baker--Ih--Rumely \cite{BIR08} in the case of $\bb{G}_m$ and elliptic curves. For example, if one consider all points in the preimage $\vphi^{-n}(\beta)$, then finiteness of $S$-integrality is proven by Szpiro--Tucker \cite{ST11} and also by Sookdeo \cite{Soo11}. Quantitative or uniform results have been established by Young \cite{You22}, Padhy--Rout \cite{PR24} and the first author \cite{Yap25}. For backward orbits, quantitative results are obtained by using Favre--Rivera-Letelier's quantitative equidistribution result \cite{FRL06}. 

\subsection{Proof sketch}
We now give a sketch of our arguments. Let   
$$\lambda_v([x_0:x_1],[y_0:y_1]) = -\log\frac{|x_0y_1 - x_1y_0|_v}{\max\{|x_0|_v,|x_1|_v\} \max\{|y_0|_v,|y_1|_v\}}$$
be the logarithmic chordal distance. Then if $\alpha$ is $S$-integral relative to $\beta$, we have $\lambda_v(\alpha,\beta) = 0$
for all $v \not \in S$. Fix some positive $\delta > 0$. To obtain finiteness, Hsia and Silverman show that if $\vphi^n(\alpha)$ is $S$-integral relative to $\beta$ and $n$ is sufficiently large, then
$$\sum_{v \in S} N_v \lambda_v(\vphi^{n-m}(\alpha),\beta_{v,m}) \geq (2+\delta) h(\vphi^{n-m}(\alpha))$$
where $m$ is some fixed natural number depending only on the degree $d$ and $\beta_{v,m}$ are elements of $\vphi^{-m}(\beta)$. Applying a quantitative version of Roth's theorem \cite[Theorem 10]{HS11} that allows summing over a finite set of places $S$, they obtain an upper bound on the number of $S$-integral points of the form $(\gamma) \cdot 4^{|S|}$.
\par 
In our proof, for each $n$ where $\vphi^n(\alpha)$ is $S$-integral relative to $\beta$, we instead show there exists a place $v_n \in S$ such that the inequality
$$N_{v_n} \log^+|\vphi^{n-m'}(\alpha)-\beta_{v_n,m'}|^{-1}_{v_n} \geq (2+\delta)h(\vphi^{n-m'}(\alpha)),$$
where $m'$ now depends not only on $d$ but also on $|S|$. The advantage is that we can now apply a quantitative Roth's theorem (Theorem \ref{QuantitativeRoth}) that only deals with a single place $v$, but in return provides us with a much better upper bound on the number of exceptions, as we will now explain.  
\par 
For simplicity, let's first assume that $\beta$ is not in the forward orbit of a critical point; i.e. $\vphi^n(x) = \beta$ has $d^n$ distinct solutions for all $n$. We let $N$ be the smallest positive integer such that $d^N > (2+\delta)|S|$. Then since 
$$\sum_{v \in M_K} N_v \log^+|\vphi^n(\alpha) - \beta|^{-1}_v = h(\vphi^n(\alpha) - \beta) \approx h(\vphi^n(\alpha)),$$
if $\vphi^n(\alpha)$ is $S$-integral relative to $\beta$, we get
$$\sum_{v \in S} N_v \log^+|\vphi^n(\alpha) -\beta|^{-1}_v \approx h(\vphi^n(\alpha)).$$
Hence there exists a place $v$ such that
\begin{equation} \label{eq: IntroInequality1}
N_v \log^+|\vphi^n(\alpha) - \beta|_v^{-1} \geq \frac{1}{|S|} h(\vphi^n(\alpha)).
\end{equation}
This implies that $\vphi^{n-N}(\alpha)$ is equally close, with some negligible error when $n$ is sufficiently large, to a root of $\vphi^N(x) = \beta$, say $\beta_n$. Then
$$N_v \log^+|\vphi^{n-N}(\alpha) - \beta_n|_v^{-1} \geq \frac{1}{|S|} h(\vphi^n(\alpha)) \geq \frac{d^N}{|S|}  h(\vphi^{n-N}(\alpha)) \geq (2+\delta) h(\vphi^{n-N}(\alpha)),$$
allowing us to apply Roth's theorem for a single place $v$. For a fixed $\beta' \in \vphi^{-N}(\beta)$ and $v \in S$, the number of exceptions to the inequality of Roth's theorem is at most $c \log ( [K(\beta'):K])^3 \leq c (d \log |S|)^3$, and each $n$ where $\vphi^n(\alpha)$ is $S$-integral relative to $\beta$ gives us such an exception. Then since there are at most $(2+\delta)d|S|$ elements in $\vphi^{-N}(\beta)$ and $|S|$ places, a pigeonhole argument implies that we can have at most $c |S|^2 (d \log |S|)^3$ values of $n$ for which $\vphi^n(\alpha)$ is $S$-integral relative to $\beta$, giving us a super-quadratic bound in $|S|$. Bringing it down to a superlinear bound requires some optimization and analyzing for how many distinct places $v$ can we obtain (\ref{eq: IntroInequality1}) for a given $n$.  
\par 
If $\beta$ is on the forward orbit of a critical point but not part of a superattracting cycle, the same argument works. For the case of a superattracting cycle, we instead analyze the growth of $N_v \log^+|\vphi^n(\alpha)-\beta|_v^{-1}$ directly, using the fact that $\beta$ attracts orbits at a fixed rate. This is done in Section \ref{sec: Superattracting} and the estimates are more technical. However, we do not invoke any deep results from diophantine approximation such as Roth's theorem and so this case can be considered more elementary.

\section{Background and Notations} \label{sec:Definition}
Let $K$ be a number field and $M_K$ the set of places of $K$. We define the Weil height $h$ for $x \in K$ by 
$$h(x) = \sum_{v \in M_K} N_v \log^+|x|_v$$
where $N_v = \frac{[K_v:\bb{Q}_p]}{[K:\bb{Q}]}$ with $p$ being the prime that $v$ lies above. For $x \in \ovl{\bb{Q}}$, this is independent of the choice of $K$ for which $x$ lies in so we may omit the field $K$. This definition extends in general to points in $\bb{P}^n(K)$ by
$$h([x_0:\cdots:x_n]) = \sum_{v \in M_K} N_v \log \max\{|x_0|_v,\ldots,|x_n|_v\}.$$
The product formula ensures that this definition is independent of the choice of homogeneous coordinates.
\par 
Let $\vphi: \bb{P}^1 \to \bb{P}^1$ be a rational map of degree $d \geq 2$ defined over $K$. Then the canonical height $\h_{\vphi}$ is given by
$$\h_{\vphi}(x) = \lim_{n \to \infty} \frac{h(\vphi^n(x))}{d^n}.$$
It is the unique function $\h_{\vphi}: \bb{P}^1(\ovl{K}) \to \bb{R}$ such that $\h_{\vphi}(\vphi(x)) = d \h_{\vphi}(x)$ and $|\h_{\vphi}(x) - h(x)|$ is uniformly bounded over all $x \in \bb{P}^1(\ovl{K})$. 
\par 
For a rational map $\vphi$ defined over $K$, we may write it as $f(z)/g(z)$ with $f(z) = \sum_{i=0}^{d} a_i z^i$ and $g(z) = \sum_{i=0}^{d} b_i z^i$ for some choice of $f,g \in K[z]$. We then define the height of $\vphi$ as
$$h(\vphi) = h([a_0:\cdots:a_d:b_0:\cdots:b_d]).$$
This is independent of the choice of $f$ and $g$ as our presentation for $\vphi$. If we normalize one of the coefficients among $a_i$ and $b_j$ to be $1$, then it follows from the definitions that 
$$h(\vphi) \geq h(a_i),h(b_j).$$
For any place $v \in M_K$, we define the chordal distance \cite[Chapter 5.1]{Ben19}
$$\rho_v([x_0:x_1],[y_0:y_1]) = \frac{|x_0y_1 - x_1y_0|_v}{\max\{|x_0|_v,|x_1|_v\} \max\{|y_0|_v,|y_1|_v\}}.$$
This is defined on $\bb{P}^1(\bb{C}_v) \times \bb{P}^1(\bb{C}_v)$ where $\bb{C}_v$ is the completion of the algebraic closure of $K_v$. The logarithmic chordal distance $\lambda_v$ is defined to be $-\log \rho_v$. If $v$ is non-archimedean, the function $\lambda_v$ is non-negative. 
\par 
Given $x,y \in \bb{P}^1(K)$ and a finite set of places $S$ of $M_K$ containing all archimedean places, we say that $x$ is $S$-integral relative to $y$ if 
$$\lambda_v(x,y) = 0 \text{ for all } v \not \in S.$$
Intuitively, $\lambda_v(x,y) > 0$ if and only if $x$ and $y$ reduce to the same residue class mod $v$ of $\bb{P}^1$. Hence being $S$-integral means that their residue class are distinct for all $v \not \in S$.

We will need to apply the following quantitative version of Roth's theorem. This appears as Theorem 10 in \cite{HS11} and is originally due to Gross \cite{Gro90}, building on the work of Bombieri--Van-Poorten \cite{BvP88}. We state the version in \cite{Gro90} as we require explicit constants.

\begin{theorem} \label{QuantitativeRoth}
Let $K$ be a number field and $v$ a place of $K$. Let $\beta \in \ovl{K}$ be an element of degree $r > 1$ over $K$. Then there are most $4c_1$ elements $x \in K$ satisfying both
$$N_v \log |x - \beta|^{-1}_v \geq \frac{5}{2} h(x) \text{ and } h(x) \geq c_2 \max\{ h(\beta),1\}$$
where one can take
$$c_1 = (2304  \log r)^3 \text{ and } c_2 = 28 (4608 \log r + 1)!$$
\end{theorem}

\begin{proof}
This is \cite[Corollary]{Gro90} with the same $c_2$, but a different $c_1$. Let $c_1$ be as in \cite[Corollary]{Gro90}. Using the fact that $\log 5rn \leq n-1$ where $n = \lfloor 2304 \log r \rfloor + 1$ as defined in \cite{Gro90},  we have the following inequality
$$c_1 \leq 2304 \log r + 8.5 (2304 \log r) \left( \log (2304 \log r) + \log ((4608 \log r)!) \right)$$ 
$$\leq 9 (2304 \log r) \left(  (4608 \log r) \log (4608 \log r) \right) \leq (2304 \log r)^3$$
and so we may replace $c_1$ with $(2304 \log r)^3$.
\end{proof}

\section{Reduction Steps} \label{sec: Reduction}
The aim of this section is to perform some reductions on Theorem \ref{IntroTheorem3} so that we may assume some advantageous hypotheses on $\vphi,\alpha$ and $\beta$. By iterating $\alpha$ by $\log^+_d(\frac{h(\vphi)+1}{\h_{\vphi}(\alpha)})$ times, we obtain that $\h_{\vphi}(\alpha) \geq h(\vphi)+1$. Hence Theorem \ref{IntroTheorem3} reduces to the following.

\begin{theorem} \label{TheoremReduced1}
Keep the notations of Theorem \ref{IntroTheorem1}. Assume that $\h_{\vphi}(\alpha) \geq h(\vphi)+1$. Then there exists $c_2 = c_2(d)$ such that 
$$|\{ n \geq 0 \mid \vphi^n(\alpha) \text{ is S-integral relative to } \beta\}| \leq c_2 |S| (\log |S|+1)^5.$$
\end{theorem}

We now show that for Theorem \ref{TheoremReduced1}, we may further make the following assumptions:

\begin{enumerate}
\item We may assume that $h(\alpha) \geq h(\vphi)+1$.

\item We may assume that 
$$\left(1+ \frac{1}{100} \right) d^n \h_{\vphi}(\alpha) \geq h(\vphi^n(\alpha)) \geq \left(1 - \frac{1}{100} \right) d^n \h_{\vphi}(\alpha)$$
for all $n \geq 1$. 

\item We may assume that $h(\alpha) \geq h(\beta)$.

\item We may assume that $\infty \not \in \{\alpha,\vphi(\alpha),\ldots\}$. 

\item We may assume that $\infty \not = \beta$ and $\infty \not \in \vphi^{-i}(\beta)$ for all $1 \leq i \leq N$ where $d^{N-1} \leq 56 \cdot 2^{2d-2} |S|$.
\end{enumerate}

Assumptions (4) and (5) are for our convenience to assume that $\infty$ never appears as one of the points that we consider. This is because we will work with the Euclidean distance $\log^+|x-y|_v$ instead of the chordal distance $\lambda_v(x,y)$ and this is only defined for $x,y \in \bb{A}^1$. 
\par 
Assumptions (1) and (2) are satisfied when $\h_{\vphi}(\alpha)$ is sufficiently large compared to $h(\vphi)$ and is easy to prove by replacing $\alpha$ with an iterate. Assumption (3) is more subtle and requires some work. Note that the upper bound in Hsia--Silverman \cite[Equation (3)]{HS11} has $\h_{\vphi}(\beta)$ appearing in the numerator. Assumption (3) is what allows us to remove this term from our bound. 
\par 
Before we begin our reductions, we first state some basic elementary height inequalities. 

\begin{lemma} \label{Heights1}
Let $K$ be a number field and let $x,y \in K$. Then 

\begin{enumerate} 
\item $h(x+y) \leq h(x) + h(y) + \log 2,$
\item $h(x-y) \geq |h(x) - h(y)| - \log 2,$
\item $h(xy) \leq h(x) + h(y).$

\end{enumerate}
\end{lemma}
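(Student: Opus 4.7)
The plan is to prove all three bounds directly from the defining formula $h(x)=\sum_{v\in M_K} N_v \log^+|x|_v$, using only two elementary facts about $\log^+$: it is monotone, and it satisfies $\log^+(ab)\le \log^+a+\log^+b$ and $\log^+\max(a,b)\le \log^+a+\log^+b$ for nonnegative reals.

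First I would handle (3). Since $|xy|_v=|x|_v|y|_v$ at every place, the subadditivity of $\log^+$ on products gives $\log^+|xy|_v\le \log^+|x|_v+\log^+|y|_v$ placewise. Multiplying by $N_v$ and summing over $v\in M_K$ yields $h(xy)\le h(x)+h(y)$ with no extra constant.

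Next I would prove (1). The triangle inequality gives $|x+y|_v\le c_v \max(|x|_v,|y|_v)$ where $c_v=2$ if $v$ is archimedean and $c_v=1$ otherwise. Taking $\log^+$ on both sides, together with $\log^+\max(a,b)\le \log^+a+\log^+b$, yields
\begin{equation*}
\log^+|x+y|_v \;\le\; \log c_v + \log^+|x|_v + \log^+|y|_v.
\end{equation*}
Multiplying by $N_v$ and summing, the contribution from the $\log c_v$ term is $(\log 2)\sum_{v\mid\infty} N_v$. Using the standard identity $\sum_{v\mid\infty} N_v=1$ (which follows from $\sum_{v\mid p} N_v=1$ for every prime or infinite place of $\mathbb{Q}$ because $[K:\mathbb{Q}]=\sum_{v\mid p}[K_v:\mathbb{Q}_p]$), this gives exactly $h(x+y)\le h(x)+h(y)+\log 2$.

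Finally, (2) is a formal consequence of (1). Writing $x=(x-y)+y$ and applying (1) gives $h(x)\le h(x-y)+h(y)+\log 2$, hence $h(x-y)\ge h(x)-h(y)-\log 2$. By the symmetric argument with $y=(y-x)+x$, and the fact that $h(x-y)=h(-(y-x))=h(y-x)$ since negation preserves the Weil height, we also get $h(x-y)\ge h(y)-h(x)-\log 2$, which together give the desired inequality.

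There is no real obstacle here; the statement is a textbook consequence of the definition of the Weil height. The only subtlety worth being careful about is making sure the constant is genuinely $\log 2$ rather than something like $[K:\mathbb{Q}]\log 2$, which is why the normalization $\sum_{v\mid\infty} N_v=1$ must be invoked at exactly the right moment.
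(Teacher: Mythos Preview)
Your proof is correct and follows essentially the same approach as the paper: parts (2) and (3) are handled identically, while for (1) the paper simply cites \cite[Proposition~5(b)]{HS11} (viewing $x,y$ as the linear polynomials $z+x,z+y$) whereas you write out the direct placewise argument from the definition of the Weil height. Your treatment of (1) is more self-contained, but the underlying computation is the same standard one.
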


\begin{proof}
The first inequality follows from Proposition 5(b) of \cite{HS11} as we may view the height of the points $x,y$ as the height of the linear polynomials $z+x$ and $z+y$. The second follows from the first by rearranging it to
$h(x) \geq h(x+y) - h(y) - \log 2$ and the third inequality follows from $\log^+|xy|_v \leq \log^+|x|_v + \log^+|y|_v$.
\end{proof}

Next is a bound between the Weil height and the canonical height of a point $x$ in terms of $h(\vphi)$ and the degree $d$. 

\begin{proposition} \label{Heights2}
Let $\vphi: \bb{P}^1 \to \bb{P}^1$ be a rational map defined over a number field $K$ of degree $d \geq 2$. Then there exists $C = C(d) > 0$ such that
\begin{equation} \label{eq:Heights2}
|\h_{\vphi}(x) - h(x)| \leq   C (h(\vphi)+1).
\end{equation}
for all $x \in \bb{P}^1(\ovl{K})$.
\end{proposition}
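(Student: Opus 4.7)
The plan is to reduce the estimate to the uniform one-step bound
\[
|h(\vphi(y)) - d\, h(y)| \leq O(dh(\vphi) + d \log d) \quad \text{for all } y \in \bb{P}^1(\ovl{K}),
\]
via the standard telescoping identity
\[
\canh(x) - h(x) = \sum_{n=0}^{\infty} \frac{h(\vphi^{n+1}(x)) - d\, h(\vphi^n(x))}{d^{n+1}}.
\]
If the one-step bound holds with constant $C$, the right-hand side is dominated in absolute value by $\sum_{n\geq 0} C d^{-(n+1)} = C/(d-1) \leq C$, proving the proposition.

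The upper direction of the one-step bound is standard. Write $\vphi = [F:G]$ with coprime homogeneous $F, G \in K[X,Y]$ of degree $d$, so $h(\vphi)$ equals the height of the joint coefficient vector of $(F, G)$. For $y = [a:b]$ and each place $v$, the triangle inequality over the at most $d+1$ monomials in each of $F, G$ (ultrametric at non-archimedean $v$) yields
\[
\max\{|F(a,b)|_v, |G(a,b)|_v\} \leq c_v \max\{|F|_v, |G|_v\} \max\{|a|_v, |b|_v\}^d,
\]
with $c_v = 1$ at non-archimedean places and $c_v = d+1$ at archimedean ones. Taking logarithms, weighting by $N_v$, and summing over $v \in M_K$ gives $h(\vphi(y)) \leq dh(y) + h(\vphi) + \log(d+1)$.

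For the lower direction I would use a resultant-based Bezout identity. Since $F, G$ are coprime, $\rho := \Res(F, G) \neq 0$, and Cramer's rule on the Sylvester matrix supplies homogeneous polynomials $A_1, B_1, A_2, B_2 \in K[X, Y]$ of degree $d-1$ with
\[
A_1 F + B_1 G = \rho X^{2d-1}, \qquad A_2 F + B_2 G = \rho Y^{2d-1}.
\]
The coefficients of $\rho, A_i, B_i$ are determinants of order at most $2d$ whose entries are coefficients of $F$ or $G$; Hadamard's inequality at the archimedean places, together with the Stirling bound $\log((2d)!) = O(d \log d)$, gives $h(\rho), h(A_i), h(B_i) \leq O(dh(\vphi) + d \log d)$. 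Evaluating the Bezout identities at $(X, Y) = (a, b)$, bounding place-by-place, and summing over $v$ (the terms $\log|\rho|_v$ cancel by the product formula applied to $\rho \in K^\times$) yields
\[
(2d-1) h(y) \leq h(\vphi(y)) + (d-1) h(y) + O(dh(\vphi) + d \log d),
\]
which rearranges to $h(\vphi(y)) \geq d h(y) - O(dh(\vphi) + d\log d)$.

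The main obstacle is verifying that the height bounds for the Bezout cofactors $A_i, B_i$ and for $\rho$ scale \emph{linearly} in $d$, rather than picking up quadratic factors that would weaken the proposition. This is precisely the improvement over the higher-dimensional estimates of Hutz and Ingram cited in the paper, and it depends on treating $\rho, A_i, B_i$ as determinants of order at most $2d$ with entries of height $\leq h(\vphi)$ and applying Hadamard's inequality carefully at archimedean places, rather than expanding the Leibniz sum naively. Once the one-step bound is established, the proposition follows from the telescoping argument above.
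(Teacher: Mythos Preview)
Your proof is correct and follows essentially the same approach as the paper: telescoping reduces to a one-step bound, the upper direction is the triangle inequality, and the lower direction comes from a resultant/Bezout identity with determinant estimates on the cofactors. Your use of the product formula on $\rho = \Res(F,G)$ (keeping it on the right side of the identity so that the cofactor coefficients are minors rather than ratios) is slightly cleaner than the paper's version, which solves for the cofactors as quotients of determinants and thereby picks up an extra factor of $d$ in the one-step lower bound $h(\vphi(x)) \geq dh(x) - O(d^2 h(\vphi) + d^2\log d)$, recovering it only after dividing by $d$ in the telescoping sum; the overall structure and final bound are identical.
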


\begin{proof}
This is standard. For example, see \cite[Theorem 3.11]{Sil07} for the general case on $\bb{P}^N$. The interested reader may also consult \cite{Ing22} for the higher dimensional case of hypersurfaces on $\bb{P}^N$.
\end{proof}

Now let's first show that we may make assumptions (4) and (5). 

\begin{proposition} \label{Assumption1}
In proving Theorem \ref{TheoremReduced1}, we may assume that $\infty \not \in \{\alpha,\vphi(\alpha),\ldots\}$. 
\end{proposition}

\begin{proof}
Observe that $h(\infty) = 0$. Hence by Proposition \ref{Heights2}, we have 
$$\h_{\vphi}(\infty) \leq C (h(\vphi)+1)$$
for $C = C(d) > 0$. 
Since $\h_{\vphi}(\alpha) \geq h(\vphi)+1$, we may iterate it $\log_d(C) + 2$ many times and using this iterate as our new $\alpha$, we may assume that 
$$\h_{\vphi}(\alpha) > \h_{\vphi}(\infty).$$
Hence $\infty \not \in \{\alpha,\vphi(\alpha),\ldots\}$ as desired. 
\end{proof}

\begin{proposition} \label{Assumption2} 
Let $N$ be the largest positive integer such that 
$$d^{N-1} \leq 56 \cdot 2^{2d-2} |S|.$$
Then we may assume that $\infty \not = \beta$ and  $\infty \not \in \vphi^{-i}(\beta)$ for all $1 \leq i \leq N$.  
\end{proposition}

\begin{proof}
Let $M$ be a positive integer that is not $\beta$ and also not an element of $\vphi^{-i}(\beta)$ for $1 \leq i \leq N$. There are at most $1 + d + \cdots + d^N \leq d^{N+1} - 1$ such elements. Hence we may choose 
$$M \leq d^{N+1} \leq 56d^2 \cdot 2^{2d-2} |S|$$
satisfying this. Let $\psi = \frac{1}{x-M}$. We replace $\vphi$ with $\vphi' = \psi \circ \vphi \circ \psi^{-1}$, $\alpha$ with $\psi(\alpha)$ and $\beta$ with $\psi(\beta)$. This swaps the role of $\infty$ with $M$ and in particular $\infty \not = \beta$ and $\infty \not \in \vphi'^{-i}(\beta)$ for all $1 \leq i \leq N$. Since $\frac{1}{x-M}$ has good reduction for all non-archimedean places, the property of being $S$-integral is preserved and hence the number of $S$-integral points in the orbit of $(\vphi')^n(\psi(\alpha))$ relative to $\psi(\beta)$ remains the same as $\vphi^n(\alpha)$ and $\beta$. 
\par 
The only thing left to check is that $\h_{\vphi'}(\psi(\alpha)) \geq h(\vphi')+1$. The height of the map $\psi = \frac{1}{x-M}$ is $\log M$ and by Proposition 5 of \cite{HS11}, the height of $\vphi'$ can be bounded by 
$$h(\vphi') \leq h(\vphi) + 2d (\log M + \log 8) \leq h(\vphi) + 2d ( (2d-2) + \log |S| + 2 \log d + \log 56 + \log 8).$$
Since $\h_{\vphi'}(\psi(\alpha)) = \h_{\vphi}(\alpha)$, we can now replace $\alpha$ with $\vphi^{n + \log |S|}(\alpha)$ for some $n = n(d) > 0$ depending only on $d$ and obtain that 
$$\h_{\vphi}(\alpha) \geq h(\vphi') + 1$$
as desired. 
\end{proof}

We now handle assumptions (1) and (2). 

\begin{proposition} \label{Assumption3}
We may assume that $h(\alpha) \geq h(\vphi) + 1$. 
\end{proposition}

\begin{proof}
Again, let $C = C(d) > 0$ be the constant in Proposition \ref{Heights2} so that 
$$|\h_{\vphi}(x) - h(x)| \leq C(h(\vphi)+1).$$
Since $\h_{\vphi}(\alpha) \geq h(\vphi)+1$, we can replace $\alpha$ with $\vphi^{n}(\alpha)$ with $n > 0$ depending only on $d$ so that 
$$\h_{\vphi}(\alpha) \geq (C+1)(h(\vphi)+1).$$
Then
$$h(\alpha) \geq h(\vphi)+1$$
as desired.
\end{proof}

\begin{proposition} \label{Assumption4}
We may assume that 
$$\left(1+ \frac{1}{100} \right) d^n \h_{\vphi}(\alpha) \geq h(\vphi^n(\alpha)) \geq \left(1 - \frac{1}{100} \right) d^n \h_{\vphi}(\alpha).$$
\end{proposition}

\begin{proof}
This is similar to Proposition \ref{Assumption3}. If $C = C(d) > 0$ is the constant so that 
$$|\h_{\vphi}(x) - h(x)| \leq C(h(\vphi)+1),$$
then as $\h_{\vphi}(\vphi^n(\alpha)) = d^n \h_{\vphi}(\alpha)$,
$$|d^n \h_{\vphi}(\alpha) - h(\vphi^n(\alpha))| \leq C h(\vphi)+1.$$
Hence if 
$$\frac{1}{100} d^n \h_{\vphi}(\alpha) \geq C h(\vphi)+1$$
holds for all $n \geq 0$, then we are done. Since $\h_{\vphi}(\alpha) \geq h(\vphi)+1$, this can be assumed to hold by replacing $\alpha$ with $\vphi^k(\alpha)$ where $k > 0$ is some large constant depending only on $d$.
\end{proof}

We finally handle assumption (3). As noted previously this step is not as straightforward as the previous reductions. The main idea is that if $h(\beta) >> h(\vphi^n(\alpha))$ for at least $|S|+1$ many $n$'s where $\vphi^n(\alpha)$ is $S$-integral relative to $\beta$, then we may find two of them such that there is a $v \in S$ for which both $|\vphi^{n_1}(\alpha) - \beta|_v$ and $|\vphi^{n_2}(\alpha) - \beta|_v$ are extremely small. This implies that $|\vphi^{n_1}(\alpha) - \vphi^{n_2}(\alpha)|_v$ is very small but we may bound this using $h(\vphi^{n_i}(\alpha))$ which gives rise to a contradiction.

\begin{proposition} \label{Assumption5}
We may assume that $h(\alpha) \geq h(\beta)$. 
\end{proposition}

\begin{proof}
Let $N_S$ be the set of $n$'s for which $\vphi^n(\alpha)$ is $S$-integral relative to $\beta$. For each $n \in N_S$, we have 
$$\sum_{v \in S} N_v \log^+|\vphi^n(\alpha) - \beta|^{-1}_v = \sum_{v \in M_K} N_v \log^+|\vphi^n(\alpha) - \beta|_v^{-1} = h(\vphi^n(\alpha) - \beta).$$
In particular, there exists $v \in S$ such that 
\begin{equation} \label{eq: AssumptionBound1}
N_v \log^+|\vphi^n(\alpha) - \beta|^{-1}_v \geq \frac{1}{|S|} h(\vphi^n(\alpha) - \beta) \geq \frac{1}{|S|} \left( h(\beta) - h(\vphi^n(\alpha)) - \log 2 \right).
\end{equation}
Now if $|N_S| \geq |S|+1$, which we may assume so as otherwise Theorem \ref{TheoremReduced1} is true. Then we may find $n_1,n_2 \in N_S$ among the first $|S|+1$ many values for which \eqref{eq: AssumptionBound1} holds for the same $v \in S$. Now since 
$$|\vphi^{n_1}(\alpha) - \vphi^{n_2}(\alpha)|_v \leq 2 \max\{ |\vphi^{n_1}(\alpha) - \beta|_v , |\vphi^{n_2}(\alpha) - \beta|_v\},$$
it follows that 
$$\log^+|\vphi^{n_1}(\alpha) - \vphi^{n_2}(\alpha)|^{-1}_v + \log 2 \geq \min\{\log^+|\vphi^{n_1}(\alpha) - \beta|^{-1}_v, \log^+|\vphi^{n_2}(\alpha) - \beta|^{-1}_v\}.$$
Applying \eqref{eq: AssumptionBound1}, we obtain 
$$N_v \log^+|\vphi^{n_1}(\alpha) - \vphi^{n_2}(\alpha)|_v^{-1} \geq \frac{1}{|S|} (h(\beta) - h(\vphi^{n_1}(\alpha)) - h(\vphi^{n_2}(\alpha)) - 2 \log 2.$$
If we assume that $n_2 \geq n_1$, then by assumption (2) we certainly have
$$h(\vphi^{n_i}(\alpha)) \leq 2 d^{n_i} \h_{\vphi}(\alpha)$$
and so we have 
$$N_v \log^+|\vphi^{n_1}(\alpha) - \vphi^{n_2}(\alpha)|^{-1}_v \geq \frac{1}{|S|} (h(\beta) - 4 d^{n_2} \h_{\vphi}(\alpha)) - 2 \log 2.$$
But we have the upper bound 
$$N_v \log^+|\vphi^{n_1}(\alpha) - \vphi^{n_2}(\alpha)|_v^{-1} \leq h(\vphi^{n_1}(\alpha) - \vphi^{n_2}(\alpha)) $$
$$\leq h(\vphi^{n_1}(\alpha)) + h(\vphi^{n_2}(\alpha)) + \log 2 \leq 4 d^{n_2} \h_{\vphi}(\alpha) + \log 2.$$
Thus we have 
$$4d^{n_2} \h_{\vphi}(\alpha) + \log 2 \geq \frac{1}{|S|} (h(\beta) - 4d^{n_2} \h_{\vphi}(\alpha)) - 2 \log 2.$$
Since $\h_{\vphi}(\alpha) \geq h(\vphi) + 1$, this implies that replacing $\alpha$ with $\vphi^{n_2}(\alpha)$ and then further iterating it $n + \log |S|$ many times for some $n = n(d) > 0$ depending on $d$ allows us to assume that 
$$\h_{\vphi}(\alpha) \geq  h(\beta) + C(h(\vphi)+1) \implies h(\alpha) \geq h(\beta),$$
where $C$ is the constant again from Proposition \ref{Heights2}. When replacing $\alpha$ with $\vphi^{n_2}(\alpha)$, we lose at most $|S|+1$ many elements in $N_S$ which we can absorb to $c |S|( \log |S|+1)^5$ in Theorem \ref{TheoremReduced1}. Thus we may assume $h(\alpha) \geq h(\beta)$ as desired.
\end{proof}

\section{The non-superattracting case} \label{sec: NonSuperAttracting}
We now prove Theorem \ref{TheoremReduced1} in the case where $\beta$ is not superattracting. Fix a place $v \in M_K$. Recall that we have a logarithmic chordal distance 
$$\lambda_v(x,y) = - \log \rho_v(x,y)$$
where
$$\rho_v(x,y) = \frac{|x_0y_1 - x_1y_0|_v}{\max\{|x_0|_v,|x_1|_v\} \max\{|y_0|_v,|y_1|_v\}}.$$
We record a simple inequality relating $\lambda_v(x,y)$ and $\log^+|x-y|^{-1}$, where $\log^+|z|^{-1} = \max\{0, -\log|z|\}$.

\begin{proposition} \label{Distance1}
There exists a constant $C > 0$ such that for all $x,y \in \bb{A}^1(\bb{C}_v)$, we have
$$\left| \lambda_v(x,y) - \log^+|x-y|_v^{-1} \right| \leq C (\log^+|y|_v + \ell_v)$$
where $\ell_v = 1$ for archimedean $v$ and $0$ for non-archimedean $v$.
\end{proposition}

\begin{proof}
We have 
$$\lambda_v(x,y) = -\log |x-y|_v + \log^+|x|_v + \log^+|y|_v.$$
If $|x-y|_v \leq 1$, then $\log^+|x-y|_v^{-1} = - \log |x-y|_v$ and the difference is $\log^+|x|_v + \log^+|y|_v$ which may be bounded by $2 \log^+|y| + 2 \ell_v$ as $|x-y|_v \leq 1$. If $|x-y|_v \geq 1$, then $\log^+|x-y|_v^{-1} = 0$ and we are to bound $-\log |x-y|_v + \log^+|x|_v + \log^+|y|_v$. Note that it suffices to bound it from above as $\lambda_v(x,y)$ is uniformly bounded from below, with a bound of $0$ for non-archimedean $v$. 
\par 
If $|x-y|_v \geq \frac{1}{2} |x|_v$, then we may bound the expression from above by $\log^+|y|_v + 1$. Otherwise if $|x|_v \geq 2|x-y|_v$, we must have $|y|_v \geq \frac{1}{2}|x|_v$ and so we may bound the expression by $C(\log^+|y|_v + 1)$. This handles the case when $v$ is archimedean.
\par 
If $v$ is non-archimedean, then we either have $|x-y|_v = \max\{|x|_v,|y|_v\}$ or $|x| = |y|_v$. If $|x|)v = |y|_v$, then we can bound the expression by $C \log^+|y|_v$ and similarly if $|x|_v < |y|_v$. Otherwise $|x|_v > |y|_v$ and so $\log |x-y|_v = |x|_v$. Hence the expression reduces to $\log^+|y|_v$ which finishes the proof. 
\end{proof}

We now have the following version of the inverse function theorem by Hsia--Silverman \cite[Proposition 7]{HS11} where we use $\log^+|x-y|_v^{-1}$ instead of $\lambda_v(x,y)$. For any point $x \in \bb{P}^1(\ovl{K})$, we let $e_x(\vphi)$ denote the ramification degree of $\vphi$ at $x$, i.e. the multiplicity of $x$ as root of $\vphi(z) = \vphi(x)$. This was proven using the tools from \cite{Sil87}, see also the more recent work of Matsuzawa--Silverman \cite{MS21}. 

\begin{proposition} \label{HsiaSilvermanImplicit1}
There exists $C > 0$ depending only on the degree $d$ such that for any $x,\beta \in \bb{A}^1(K)$ with $\infty \not \in \vphi^{-1}(\beta)$ and $\vphi(x) \not = \infty$, and any finite set of places $S \subseteq M_K$, we have 
$$\sum_{v \in S} \max_{\beta_i \in \vphi^{-1}(\beta)} e_{\beta_i}(\vphi) n_v \log^+|x - \beta_i|^{-1}_v \geq \sum_{v \in S} N_v \log^+|\vphi(x)-\beta|^{-1}_v - C(h(\beta) + h(\vphi)+1)$$
\end{proposition}

\begin{proof}
By \cite[Proposition 7]{HS11}, the inequality holds with $\lambda_v(x,\beta)$ replacing $\log^+|x-\beta|^{-1}_v$. As $e_{\beta_i}(\vphi) \leq 2d-2$, Proposition \ref{Distance1} allows us to replace $\lambda_v(x,\beta)$ with $\log^+|x-\beta|^{-1}_v$ and incurring an error of 
$$(2d-2) C' \sum_{v \in S} N_v(\log^+|\beta|_v+\ell_v)$$
for some uniform $C' = C'(d) > 0$ depending only on the degree $d$. We may then absorb the error into $C(h(\beta)+1)$ as desired.
\end{proof}

We may now repeatedly iterate Proposition \ref{HsiaSilvermanImplicit1} to obtain an error bound for $\vphi^n$ depending on a fixed power of $d$.

\begin{proposition} \label{HsiaSilvermanImplicit2}
There exists $c = c(d) > 0$ depending only on the degree $d$ such that for any $x,\beta \in \bb{A}^1(K)$ with $\infty \not \in \vphi^{-i}(\beta)$ and $\vphi^i(x) \not = \infty$ for $1 \leq i \leq n$, and any finite set of places $S \subseteq M_K$, we have 
$$\sum_{v \in S} \max_{\beta_i \in \vphi^{-n}(\beta)} e_{\beta_i}(\vphi^n) N_v \log^+|x - \beta_i|^{-1}_v \geq \sum_{v \in S} n_v \log^+|\vphi^n(x)-\beta|^{-1}_v - d^{cn}(h(\beta) + h(\vphi)+1)$$
\end{proposition}

\begin{proof}
We prove this by induction, where the existence of such a $c = c(d) > 0$ for case of $n = 1$ follows from Proposition \ref{HsiaSilvermanImplicit1}. We now fix such a $c$ that also satisfies 
$$d^{ck} + d^{k+c} \leq d^{(k+1)c}$$
for all $k \geq 1$. This can be done by making $c$ large enough. 
By Proposition \ref{Heights2}, we may work with the quantity $\h_{\vphi}(\beta)$ instead of $h(\beta)$ for the proof of Proposition \ref{HsiaSilvermanImplicit2}. Assume that we have proven for $n = k$. We thus obtain 
\begin{equation} \label{eq: Implicit1}
\sum_{v \in S} \max_{\beta_i \in \vphi^{-k}(\beta)} e_{\beta_i}(\vphi^k) N_v \log^+|\vphi(x) - \beta_i|_v^{-1}
\end{equation}
$$
\geq \sum_{v \in S} n_v \log^+|\vphi^{k+1}(x) - \beta|_v^{-1} - d^{ck}(\h_{\vphi}(\beta) + h(\vphi)+1).
$$
Now for each $\beta_i \in \vphi^{-k}(\beta)$, as $\h_{\vphi}(\beta_i) \leq \h_{\vphi}(\beta)$, we  can apply the $n = 1$ case to it to obtain 
\begin{equation} \label{eq: Implicit2}
\sum_{v \in S} \max_{\beta_j \in \vphi^{-1}(\beta_i)} e_{\beta_j}(\vphi) N_v \log^+|x - \beta_j|_v^{-1} \geq \sum_{v \in S} N_v \log^+|\vphi(x) - \beta_i|_v^{-1} - d^c (\h_{\vphi}(\beta) + h(\vphi)+1).
\end{equation}
Now observe that if $\vphi(\beta_j) = \beta_i$, then $e_{\beta_j}(\vphi) e_{\beta_i}(\vphi^k) = e_{\beta_j}(\vphi^{k+1})$. We now apply \eqref{eq: Implicit2} to \eqref{eq: Implicit1} on each $\beta_i \in \vphi^{-k}(\beta)$, for which there are at most $d^k$ many of them. We thus obtain 
$$\sum_{v \in S} \max_{\beta_j \in \vphi^{-(k+1)}(\beta)} e_{\beta_j}(\vphi^{k+1}) N_v \log^+|x-\beta_j|^{-1}_v $$
$$\geq \sum_{v \in S} n_v \log^+|\vphi^{k+1}(x) - \beta|_v^{-1} - (d^{ck} + d^{k+c})(\h_{\vphi}(\beta) + h(\vphi)+1).$$
Since we assumed $c$ was large enough so that 
$$d^{ck} + d^{k+c} \leq d^{(k+1)c},$$
the last expression is 
$$\geq \sum_{v \in S} N_V \log^+|\vphi^{k+1}(x) - \beta|_v^{-1} - d^{c(k+1)} (\h_{\vphi}(\beta) + h(\vphi)+1)$$
as desired, completing the induction. 
\end{proof}

We also need the following estimate which deals with how far $\vphi(z)$ and $\vphi(\beta)$ be apart compared to $z$ and $\beta$. This reduces to bounding the spherical Lipschitz constant of $\vphi$, which we will use results due to Rumely--Winburn \cite{RL15} and Gauthier--Okuyama--Vigny \cite{GOV20}. 

\begin{proposition} \label{Distance2}
There exists $C_1,C_2 > 0$ depending only on $d$ such that for $z,\beta \not = \infty$ with $\vphi(z), \vphi(\beta) \not = \infty$, if 
$$\log^+|z-\beta|^{-1}_v \geq C_1[K:\bb{Q}](h(\vphi) + h(\beta)+1),$$
then 
$$\log^+|z-\beta|^{-1}_v -  \log^+|\vphi(z) - \vphi(\beta)|^{-1}_v  \leq C_2[K:\bb{Q}](h(\vphi) + h(\beta)+1).$$
\end{proposition}

\begin{proof}
By Proposition \ref{Distance1}, it suffices to prove the existence of a constant $C = C(d) > 0$ such that 
$$\lambda_v(z,\beta) - \lambda_v(\vphi(z),\vphi(\beta)) \geq -C[K:\bb{Q}](h(\vphi)+1)$$
 as $\log^+|\beta|_v \leq [K:\bb{Q}]h(\beta)$. Bounding $\lambda_v(z,\beta) - \lambda_v(z,\vphi(\beta))$
from below is equivalent to giving a Lipschitz constant for the chordal metric $\rho_v$ as $\lambda_v = - \log \rho_v$. We apply Lemma 2.5 of \cite{GOV20} to see that there is a constant $C' = C'(d) > 0$ depending only on $d$ such that 
$e^{C[K:\bb{Q}](h(f)+1)}$ is a Lipschitz constant for $f$. This is standard by picking a lift $F$ of $f$ so that $|F| = 1$. Then $-\log |\Res(F)|_v$ can be bounded from above in terms of $c [K:\bb{Q}]h(f)$ where $c$ depends on the degree $d$. Hence we obtain 
$$\lambda_v(z,\beta) - \lambda_v(z,\vphi(\beta)) \geq -C' [K:\bb{Q}](h(f)+1)$$
which gives us what we want.
\end{proof}

We are now ready to begin the proof of Theorem \ref{TheoremReduced1} assuming that $\beta$ is not superattracting. We set $N$ to be the largest positive integer such that 
$$d^{N-1} \leq 56 \cdot 2^{2d-2} |S|.$$
First, because $\beta$ is not superattracting, by \cite[Lemma 4.3]{Sil93}, we must have 
$$e_{\beta'}(\vphi^n) \leq 2^{2d-2}$$
for any $\beta' \in \vphi^{-n}(\beta)$ and for all $n \geq 1$. This bounds $e_{\beta'}(\vphi^n)$ uniformly from above in Proposition \ref{HsiaSilvermanImplicit2}. We emphasize that this is the only point where we use that $\beta$ is non-superattracting. 
\par 
Next, by Section \ref{sec: Reduction}, we can assume that the following assumptions hold:

\begin{enumerate}
\item We may assume that $\h_{\vphi}(\alpha) \geq h(\vphi) + 1$.

\item We may assume that 
$$\left(1+ \frac{1}{100} \right) d^n \h_{\vphi}(\alpha) \geq h(\vphi^n(\alpha)) \geq \left(1 - \frac{1}{100} \right) d^n \h_{\vphi}(\alpha)$$
for all $n \geq 1$. 

\item We may assume that $h(\alpha) \geq h(\beta)$.

\item We may assume that $\infty \not \in \{\alpha,\vphi(\alpha),\ldots\}$. 

\item We may assume that $\infty \not \in \vphi^{-i}(\beta)$ for all $1 \leq i \leq N$ where $N$ is the largest positive integer such that $d^{N-1} \leq 56 \cdot 2^{2d-2} |S|$.
\end{enumerate}

We first show the following. This is similar to (22) in \cite{HS11}, except that we look at the $N^{th}$ preimage where $N$ depends on $|S|$ and so we have to be more careful with our bounds. 

\begin{lemma} \label{SIntegralBound1}
Assume that $\vphi^n(\alpha)$ is $S$-integral relative to $\beta$. There exists a constant $n_0 = n_0(d) > 0$ such that if $n \geq n_0 \log |S|$, we have the lower bound
$$\sum_{v \in S} \sum_{\beta_i \in \vphi^{-N}(\beta)} N_v \log^+|\vphi^{n-N}(\alpha)-\beta_i|_v^{-1} \geq \frac{d^{n}}{3 \cdot 2^{2d-2}} h(\alpha).$$
\end{lemma}

\begin{proof}
We apply Proposition \ref{HsiaSilvermanImplicit2} and obtain a constant $C_1 > 0$, depending only on $d$, such that
$$2^{2d-2} \sum_{v \in S} \sum_{\beta_i \in \vphi^{-N}(\beta)} N_v\log^+|\vphi^{n-N}(\alpha)-\beta_i|_v^{-1}$$
$$\geq \sum_{v \in S} N_v \log^+|\vphi^n(\alpha)-\beta|_v^{-1} -  d^{C_1 N } \left( h(\vphi) + h(\beta) + 1 \right)$$
$$= h(\vphi^n(\alpha) - \beta) - d^{C_1 N}(h(\vphi) + h(\beta)+1).$$
By assumption, we may have $h(\vphi^n(\alpha)) \geq \frac{2d^n}{3}h(\alpha)$ and $h(\alpha) \geq h(\vphi)+1,h(\beta)$. For $n$ large enough say $\geq 100$, we certainly have 
$$h(\vphi^n(\alpha) - \beta) \geq h(\vphi^n(\alpha)) - h(\beta) - \log 2 \geq \frac{d^n}{2} h(\alpha).$$
We may then choose $n_0$ large enough, depending only on $d$, so that 
$$\frac{d^{n_0 \log |S|}}{6} h(\alpha) \geq d^{C_1 N}(h(\vphi)+h(\beta)+1)$$
as $d^{N-1} \leq 56 \cdot 2^{2d-2} |S|$. Thus 
$$2^{2d-2} \sum_{v \in S} \sum_{\beta_i \in \vphi^{-N}(\beta)} N_v \log^+|\vphi^{n-N}(\alpha) - \beta_i|^{-1}_v \geq \frac{d^n}{2} h(\alpha) - \frac{d^n}{6}h(\alpha) = \frac{d^n}{3}h(\alpha).$$
Dividing by $2^{2d-2}$ gives us what we want.
\end{proof}

For each $v \in S$, we let $\beta_{v,n}$ be the element of $\vphi^{-N}(\beta)$ such that the quantity $\log^+|\vphi^{n-N}(\alpha)-\beta_i|^{-1}_v$ obtains its maximum over all $\beta_i \in \vphi^{-N}(\beta)$. We next show that we can obtain a similar inequality just by using the $\beta_{v,n}$'s. 

\begin{lemma} \label{SIntegralBound2}
Assume that $\vphi^n(\alpha)$ is $S$-integral relative to $\beta$. There exists a constant $n_0 = n_0(d) > 0$ such that if $n \geq n_0 \log |S|$, we have
$$\sum_{v \in S} N_v \log^+|\vphi^{n-N}(\alpha) - \beta_{v,n}|_v^{-1} \geq \frac{d^n}{4 \cdot 2^{2d-2}} h(\alpha).$$
\end{lemma}

\begin{proof}
For any $\beta_i \in \vphi^{-N}(\beta)$ that is not $\beta_{v,n}$, we have 
$$|\vphi^{n-N}(\alpha) - \beta_i|_v + |\vphi^{n-N}(\alpha) - \beta_{v,n}|_v \geq |\beta_i - \beta_{v,n}|_v \implies |\vphi^{n-N}(\alpha) - \beta_i|_v \geq \frac{1}{2} |\beta_i - \beta_{v,n}|_v$$
due to the maximality of $\beta_{v,n}$. Thus 
$$\log^+|\vphi^{n-N}(\alpha)-\beta_i|^{-1}_v \leq \log 2 + \log^+|\beta_i - \beta_{v,n}|_v^{-1}.$$
We now sum over $v \in S$ to get
$$\sum_{v \in S} N_v \log^+|\vphi^{n-N}(\alpha) - \beta_i|_v^{-1} \leq |S| \log 2 + \sum_{v \in S} N_v \log^+|\beta_i - \beta_{v,n}|_v^{-1}.$$
Now each element of $\vphi^{-N}(\beta)$ is of degree at most $d^{N} \leq 56 d \cdot 2^{2d-2}|S|$ over $K$ and so $\beta_i - \beta_{v,n}$ is of degree at most $(56 d \cdot 2^{2d-2}|S|)^2$. In particular, 
$$N_v \log^+|\beta_i - \beta_{v,n}|_v^{-1} \leq (56 d \cdot 2^{2d-2} |S|)^2 h(\beta_i - \beta_{v,n}).$$
Thus we obtain
$$\sum_{v \in S} \sum_{\substack{\beta_i \in \vphi^{-N}(\beta) \\ \beta_i \not = \beta_{v,n}}} N_v \log^+|\vphi^{n-N}(\alpha) - \beta_i|^{-1} \leq d^N |S| \log 2 + d^N |S| (56 d \cdot 2^{2d-2} |S|)^2 h(\beta - \beta_{v,n}).$$
Now we have 
$\h_{\vphi}(\beta_{v,n}) \leq \h_{\vphi}(\beta)$ and so if $C_1$ is the constant from Proposition \ref{Heights2}, we have
$$h(\beta_{v,n}) \leq h(\beta) + 2C_1(h(\vphi) + 1).$$
Thus 
$$h(\beta - \beta_{v,n}) \leq h(\beta) + h(\beta_{v,n}) + \log 2 \leq 2 h(\beta) + 2C_2(h(\vphi)+1)$$
for some $C_2$ depending only on $d$. Now by Lemma \ref{SIntegralBound1}, if $n \geq n_0 \log |S|$ for $n_0$ large enough depending on $d$, we get
$$\sum_{v \in S} N_v \log^+|\vphi^{n-N}(\alpha) - \beta_{v,n}|_v^{-1} \geq \frac{d^n}{2^{2d-2} \cdot 3} h(\alpha) - \sum_{v \in S} \sum_{\substack{\beta_i \in \vphi^{-N}(\beta) \\ \beta_i \not = \beta_{v,n}}} N_v \log^+|\vphi^{n-N}(\alpha) - \beta_i|_v^{-1}$$
$$\geq \frac{d^n}{3 \cdot 2^{2d-2} } h(\alpha) - d^N |S| \log 2 - d^N |S|(56 d \cdot 2^{2d-2} |S|)^2 (2 h(\beta) + 2C_2 h(\vphi)+1).$$ 
We can make this $\geq \frac{d^n}{4 \cdot 2^{2d-2}} h(\alpha)$
if $n \geq n_0 \log |S|$ where we take $n_0$ large enough depending only on $d$ as desired since $h(\alpha) \geq h(\vphi)+1,h(\beta)$. This finishes the proof of our lemma.
\end{proof}

 If one is happy with a polynomial bound in $|S|$, then Lemma \ref{SIntegralBound2} only with Roth's theorem is enough. To see this, note that by choice of our $N$, we have 
 $$\frac{d^n}{4 \cdot 2^{2d-2}} \geq 3|S| d^{n-N}.$$ 
 Then we always have a place $v \in S$ and $\beta_i \in \vphi^{-N}(\beta)$ for which 
 \begin{equation} \label{eq: SIntegralBound1} 
 N_v \log^+|\vphi^{n-N}(\alpha) - \beta_i| \geq \frac{5}{2} h(\vphi^{n-N}(\alpha)).
 \end{equation}
As each $\beta_i$ is of degree at most $d^N$, there can be at most $N \log d$ many distinct $\vphi^{n-N}(\alpha)$'s satisfying this which is $O(d \log |S|)$. Thus we have at most $O(d \log |S|)$ many possibilities for each pair $(v,\beta_i)$ and there are at most $O(2^{2d-2}|S|^2)$ many such pairs. This gives a polynomial bound as desired.
\par 
To improve this to a quasilinear bound, we have to do some combinatorial analysis. Let's first see why it is reasonable to expect an improvement in the above argument. First, we have 
$$\sum_{v \in S} N_v \log^+|\vphi^{n-N}(\alpha) - \beta_{v,n}|^{-1}_v \geq \frac{d^n}{3} h(\alpha).$$
To obtain \eqref{eq: SIntegralBound1}, we argued that for a single place $v$, we have 
$$N_v \log^+|\vphi^{n-N}(\alpha) - \beta_{v,n}|^{-1}_v \geq \frac{d^n}{3|S|} h(\alpha).$$
If we cannot improve this bound, this implies that this inequality holds for all places $v \in S$ and hence $\vphi^{n-N}(\alpha)$ is a counterexample to Roth's theorem for $|S|$ pairs $(\beta_{v,n},v)$. This would then allow for at most $O(|S|)$ many $\vphi^{n-N}(\alpha)$'s. 
\par 
Going to the other extreme, the inequality may only hold for one place $v$, in which case for simplicity we would have an inequality of the form 
$$N_v \log^+|\vphi^{n-N}(\alpha) - \beta_{v,n}|^{-1}_v \geq \frac{d^n}{4}h(\alpha).$$
Then ignoring errors, we would obtain an inequality of the form  
$$N_v \log^+|\vphi^{n-4}(\alpha) - \vphi^{N-4}(\beta_{v,n})|^{-1}_v \geq \frac{d^n}{5} h(\alpha)$$
and since $\frac{25}{2} \leq d^3$, it follows that $\vphi^{n-4}(\alpha) - \vphi^{N-4}(\beta_{v,n})$ is a counterexample to Roth. But now $\vphi^{N-4}(\beta_{v,n}) \in \vphi^{-4}(\beta)$ of which there is only at most $d^4$ many possibilities. So again we are left with a $O(|S|)$ number of possibilities. 
\par 
Of course the above two cases are the extreme cases and there are many more possibilities. To quantify the different possibilities, we define the following subsets $X_i \subseteq \bb{N}$ as follows. We say that $n \in \bb{N}$ belongs to $X_i$ if 
$$N_v \log^+| \vphi^{n-N}(\alpha) - \beta_{v,n}|_v^{-1} \geq 4 d^{n-N+i} h(\alpha)  \text{ for at least } \frac{d^{N-i-1}}{56N \cdot 2^{2d-2}} \text{ distinct } v  \in S.$$
Then $i = 0$ and $i = N-1$ correspond roughly to the two extreme cases discussed above.

\begin{lemma} \label{SIntegralBound3}
Assume that $\vphi^n(\alpha)$ is $S$-integral relative to $\beta$ and that $n \geq n_0 \log |S| $ for $n_0$ large enough for Lemma \ref{SIntegralBound2} to hold. Then $n \in X_i$ for at least one $i$ with $0 \leq i \leq N-1$. 
\end{lemma}

\begin{proof}
Let's assume otherwise that $n \not \in X_i$ for all $0 \leq i \leq N-1$. For $1 \leq i \leq N$, let $S_i$ be the subset of places of $S$ that satisfy
$$N_v \log^+|\vphi^{n-N}(\alpha) - \beta_{v,n}|^{-1}_v \geq 4 d^{n-N+i} h(\alpha).$$
We have the chain of inclusions $S \supset S_0 \supset S_1 \cdots \supset S_{N-1}$. Then since $n \not \in X_i$, we must have $|S_i| \leq \frac{d^{N-i-1}}{56N \cdot 2^{2d-2}}$. Let $T = S \setminus S_0$. For $v \in T$, we then have the upper bound 
$$N_v \log^+|\vphi^{n-N}(\alpha) - \beta_{v,n}|_v^{-1} \leq 4d^{n-N} h(\alpha).$$
For each $v \in  S_{i} \setminus S_{i+1}$ where $i \geq 0$, we have the upper bound 
$$N_v \log^+|\vphi^{n-N}(\alpha) - \beta_{v,n}|_v^{-1} \leq 4d^{n-N+i+1} h(\alpha).$$
Furthermore, $S_{N-1}$ is empty since
$$|S_{N-1}| \leq \frac{1}{56N \cdot 2^{2d-2}}.$$
For $i \geq 1$, we have $|S_{i} \setminus S_{i+1}| \leq |S_i| \leq \frac{d^{N-i-1}}{56N \cdot 2^{2d-2}}$. Hence 
$$\sum_{v \in S_{i} \setminus S_{i+1}} N_v \log^+|\vphi^{n-N}(\alpha) - \beta_{v,n}|_v^{-1} \leq 4d^{n-N+i+1} h(\alpha) \frac{d^{N-i-1}}{56N \cdot 2^{2d-2}} = \frac{d^n}{14N \cdot2^{2d-2}} h(\alpha).$$
Writing $S$ as $T \bigcup \left(\bigcup_{i=0}^{N-1} (S_{i} \setminus S_{i+1}) \right)$, we have the bound 

$$\sum_{v \in S} N_v \log^+|\vphi^{n-N}(\alpha) - \beta_{v,n}|_v^{-1} \leq \sum_{i=0}^{N-1} \frac{d^n}{14N \cdot2^{2d-2}} h(\alpha) + 4|S| d^{n-N} h(\alpha)$$ 
$$\leq \frac{d^n}{14 \cdot2^{2d-2}}  h(\alpha) + \frac{d^n}{14 \cdot2^{2d-2}} h(\alpha) = \frac{d^n}{7 \cdot 2^{2d-2}} h(\alpha).$$
But this is a contradiction to Lemma \ref{SIntegralBound2}, which implies that $n \in X_i$ for at least one $i$ with $0 \leq i \leq N-1$.
\end{proof}

Before finishing the proof of Theorem \ref{TheoremReduced1}, we need to note the following inequality. 

\begin{proposition} \label{SIntegralBound4}
There exists $n_0 = n_0(d) > 0$ such that if $n \geq n_0 \log |S|$, then 
$$N_v \log^+|\vphi^{n-N}(\alpha) - \beta'|_v^{-1} \geq 4d^{n-N+i}h(\alpha)$$
for $\beta' \in \vphi^{-N}(\beta)$ implies that 
$$N_v \log^+|\vphi^{n-N+i}(\alpha) - \vphi^i(\beta')|_v^{-1} \geq 3 d^{n-N+i} h(\alpha).$$
\end{proposition}

\begin{proof}
We apply Proposition \ref{Distance2}. We have to be careful as we have to deal with $[K(\beta'):\bb{Q}]$ instead of $[K:\bb{Q}]$. For this, we note that $$[K(\beta'):\bb{Q}] \leq d^N \cdot [K:\bb{Q}] \leq 56d \cdot 2^{2d-1} \cdot |S|^2$$
holds for any $\beta' \in \vphi^{-j}(\beta)$ for $1 \leq j \leq N$, as $[K:\bb{Q}] \leq 2|S|$. Now let $C_1,C_2$ be the two constants from Proposition \ref{Distance2}. Then if $n_0$ is taken large enough depending only on $d$, we will have 
$$\frac{1}{N} d^{n-N}h(\alpha) \geq  \max\{C_1,C_2\} [K(\beta'):\bb{Q}](h(\vphi) + h(\beta')+1)$$
for all $\beta' \in \vphi^{-j}(\beta)$, for any $1 \leq j \leq N$. Indeed, this follows from $h(\alpha) \geq h(\vphi)+1, h(\beta)$, our upper bound on $[K(\beta'):\bb{Q}]$, and Proposition \ref{Heights2} along with the fact that $\h_{\vphi}(\beta') \leq \h_{\vphi}(\beta)$. 
\par 
We may now apply Proposition \ref{Distance2} a total of $i$ times to obtain that 
$$N_v \log^+|\vphi^{n-N+i}(\alpha) - \vphi^i(\beta')|_v^{-1} \geq 4d^{n-N+i} h(\alpha) - \frac{i}{N} d^{n-N} h(\alpha) \geq 3d^{n-N+i} h(\alpha)$$
as desired. 
\end{proof}

We can now conclude Theorem \ref{TheoremReduced1} by a simple counting argument. 

\begin{proof}[Proof of Theorem \ref{TheoremReduced1} when $\beta$ is not superattracting]
Let $N_S$ be the set of natural numbers so that $\vphi^n(\alpha)$ is $S$-integral relative to $\beta$. By Lemma \ref{SIntegralBound3}, we may fix $n_0 = n_0(d) > 0$ such that for all $n \geq n_0 \log |S|$, we have that $n \in N_S$ implies $n \in X_i$ for some $0 \leq i \leq N-1$. Assume $n_0$ is also large enough for Proposition \ref{SIntegralBound4} to hold too. By pigeonhole, at least 
$$\frac{|N_S|- n_0\log |S|}{N}$$
many $n \in N_S$ belong to the same $X_i$. For such a $n$, we obtain $\frac{d^{N-i-1}}{56 N \cdot 2^{2d-2}}$ many $v \in S$ such that 
$$N_v \log^+|\vphi^{n-N}(\alpha) - \beta_{v,n}|_v^{-1} \geq 4d^{n-N+i}h(\alpha).$$
Applying Proposition \ref{SIntegralBound4}, we get
$$N_v \log^+|\vphi^{n-N+i}(\alpha) - \vphi^i(\beta_{v,n})|^{-1}_v \geq 3d^{n-N+i}h(\alpha).$$
Since $h(\vphi^{n-N+i}(\alpha)) \leq \frac{6}{5} d^{n-N+i}h(\alpha)$, each pair $(\vphi^{n-N+i}(\alpha), v)$ gives us a counterexample to Roth's theorem for $\vphi^i(\beta_{v,n})$. Since $\vphi^i(\beta_{v,n}) \in \vphi^{-N+i}(\beta)$, there are at most $d^{N-i}$ possibilities. Each such $n$ gives us $\frac{d^{N-i-1}}{56 N \cdot 2^{2d-2}}$ many pairs. 
\par 
Assume that $|N_S| \geq c |S| N^2 + n_0 \log |S|$ for some $c > 0$. Then we have a total of 
$$\frac{c |S| N^2}{N} \cdot \frac{d^{N-i-1}}{56 N \cdot 2^{2d-2}}$$
many pairs. Thus there must be a $\beta' \in \vphi^{-N+i}(\beta)$ and $v \in S$ for which we have at least 
$$\frac{c |S| N^2 d^{N-i-1}}{d^{N-i} (56) 2^{2d-2} |S| N^2} = \frac{c}{56 d 2^{2d-2}}$$
many $n$'s for which 
$$N_v \log^+|\vphi^{n-N+i}(\alpha) - \beta'|_v \geq \frac{5}{2} h(\vphi^{n-N+i}(\alpha)).$$
By Theorem \ref{QuantitativeRoth}, there are at most $O(\log r)^3$ many such $n$'s if 
$$h(\vphi^n(\alpha)) \geq 28(4608 \log r+1)! \max\{1,h(\beta)\}.$$
 Here, we may take $r \leq d^N \leq C' \cdot |S|$ for some $C' = C'(d) > 0$. Then the height inequality is attained if $n \geq |S| + n_0 \log |S|$ for $n_0$ sufficiently large depending on $d$. We can certainly assume this is the case for our $n$'s as we may just replace $\alpha$ with $\vphi^{|S| + n_0 \log |S|}(\alpha)$. 
 Hence if $c \geq C' (56 d 2^{2d-2}) (\log |S|)^3$, we obtain a contradiction. This gives us at most 
$$C' |S| (56 d 2^{2d-2})N^2 (\log |S|)^3$$
many possible $n$'s, which we may bound by $c|S| (\log |S|)^5$ as desired for some $c = c(d) > 0$.
\end{proof}

\section{The superattracting case} \label{sec: Superattracting}
We now handle the case where $\beta$ is superattracting. We are unable to use the same argument as in Section \ref{sec: NonSuperAttracting} since the multiplicity $e_{\beta'}(\vphi^n)$ will grow as $n$ increases, causing our bounds to be weaker than even Hsia--Silverman's \cite{HS11}.
\par 
We instead exploit the fact that $\beta$ is superattracting to bound how close $\vphi^{n}(\alpha)$ can be near $\beta$. Indeed if we assume that $\beta$ is fixed by $\vphi$ with multiplicity $e$ for simplicity, assuming that $\alpha$ was sufficiently close to $\beta$, we must have 
\begin{equation} \label{eq: Superattracting1}
\log^+|\vphi^{n}(\alpha) - \beta|^{-1}_v \approx e^n \log^+|\alpha - \beta|^{-1}_v.
\end{equation}
On the other hand, $h(\vphi^n(\alpha) -\beta)$ grows like $d^n h(\alpha)$ and as $e \leq d-1$, this leads to a contradiction once $n$ is sufficiently large.  
\par 
We remark that although the inequalities involved appear more technical, this case is more elementary as we do not use Roth's theorem and only use height inequalities coming from the product formula.
\par 
We first quantify \eqref{eq: Superattracting1}. Given our rational map $\vphi: \bb{P}^1 \to \bb{P}^1$ of degree $d \geq 2$ over $K$ and an $\beta \in \bb{A}^1(K)$, we give an upper bound on $d_n$ where $d_n$ is the $n$th Taylor series coefficient. 

\begin{proposition} \label{DerivativeBound1}
 Assume that $\beta,\vphi(\beta) \in K$. Write 
 $$\vphi(z) = \vphi(\beta) + d_1 (z-\beta) + d_2 (z-\beta)^2 + \cdots$$
 Then there exists $C = C(d) > 0$ depending only on $d$ such that  
$$h(d_n) \leq Cn(h(\vphi) + h(\beta)+1).$$. 
\end{proposition}

\begin{proof}
We may translate $\beta$ so that $\beta = 0$. Indeed, this increases $h(\vphi)$ by at most $c(h(\beta)+1)$ for $c = c(d) > 0$ depending only on $d$ which is fine for us. Now write 
$$\vphi(z) = \frac{f(z)}{g(z)} = \frac{a_0 + a_1 z + \cdots + a_d z^d}{b_0 + b_1 z + \cdots + b_d z^d}$$
where $b_0 = 1$ since $\vphi(\beta) \not = \infty$. Then $h(a_i),h(b_j) \leq h(\vphi)$ by definition. We now expand 
$$\frac{1}{1 -(- b_1z - \cdots - b_dz^d)} = 1 + (b_1z + \cdots + b_d z^d) + (b_1z + \cdots + b_d z^d)^2 + \cdots = 1 + c_1z + c_2 z^2 + \cdots.$$
For non-archimedean $v$, we may bound 
$$|c_n|_v \leq \prod_{i=1}^{d} \max\{|b_i|_v,1\}^n $$
as $c_n$ is a sum of terms where each term is a product of at most $n$ many $b_i$'s. For archimedean $v$, we have 
$$|c_n|_v \leq  (\sum_{i=1}^{n} 2^i) \prod_{i=1}^{d} \max\{|b_i|_v,1\}^n$$
as it is a sum of at most $\sum_{i=1}^{n} 2^i$ many terms, where each term is a product of at most $n$ many $b_i$'s. 
Hence if $g_n = 1 + c_1 z + \cdots + c_n z^n$, we have 
$$h(g_n) \leq n \sum_{i=1}^{d} h(b_i) + \log( 2^{n+1}) \leq dn h(\vphi) + (n+1) \log 2.$$
Finally, if 
$$(a_0 + a_1 z + \cdots + a_d z^d)(1 + c_1 z + \cdots) = d_0 + d_1 z + \cdots,$$
then applying Proposition 5 of \cite{HS11} to $f(z) = a_0 + a_1z + \cdots + a_d z^d$ and $g(z) = 1 + c_1z + \cdots + c_n z^n$ gives us 
$$h(d_n) \leq h(d_0) + (h(f) + h(g_n) + (d+n+2) \log 2 \leq (dn+2) h(\vphi) + (2n+3 + d)\log 2$$
Here, the quantity $h(d_0)$ comes from the fact that if $Q = e_0 + e_1 z + \cdots + e_n z^n$ satisfies $h(Q) \leq N$, then $h(\frac{e_i}{e_0}) \leq N$ and so $h(e_i) \leq h(e_0) + N$. Then as $d_0 = a_0$, we may bound $h(d_0) \leq h(\vphi)$. This finishes our proof. 
\end{proof}

As in Proposition \ref{DerivativeBound1}, consider the expansion 
$$\vphi(z) = \vphi(\beta) + d_1 (z-\beta) + d_2 (z-\beta)^2 + \cdots$$
where now we have an upper bound on $h(d_n)$ that is linear in $n$. This allows us to bound how small $|z-\beta|$ has to be in order for \eqref{eq: Superattracting1} to hold.

\begin{proposition} \label{DerivativeBound2}
Let $K$ be a number field and $\vphi$ a rational map defined over $K$ of degree $d \geq 2$. Assume that $\beta$ and $\vphi(\beta)$ are both in $K$. Then for any place $v \in M_K$, there exists constants $C_1 = C_1 (d) > 0, C_2 = C_2(d) > 0$, both depending only on the degree $d$, with $C_1 \geq C_2$, such that if $z \in \bb{C}_v$ satisfies 
\begin{equation} \label{eq: SuperAttracting2}
\log |z-\beta|_v \leq -C_1[K:\bb{Q}](h(\vphi) + h(\beta)+1),
\end{equation}
then
$$\left| \log |\vphi(z)-\vphi(\beta)|_v - k \log |z-\beta|_v \right| \leq C_2 [K:\bb{Q}](h(\vphi) + h(\beta) + 1)$$
where $k$ is the order of vanishing of $\beta$ for the equation $\vphi(z) = \vphi(\beta)$. 
\end{proposition}

\begin{proof}
For simplicity, let $M = [K;\bb{Q}](h(\vphi) + h(\beta)+1)$. We expand 
$$\vphi(x) = \vphi(\beta) + d_1(z-\beta) + d_2(z-\beta)^2 + \cdots.$$
Let $d_k$ be the smallest $k$ for which $d_k \not = 0$. Then $k \leq 2d-2$ and 
$$h(d_k) \leq C(h(\vphi) + h(\beta)+1)$$
where $C$ is the constant from Proposition \ref{DerivativeBound1}. Thus 
\begin{equation} \label{eq: SuperAttracting6}
\log |d_k|_v \geq - CkM.
\end{equation}
Similarly for all $n \geq k$, we have 
\begin{equation} \label{eq: SuperAttracting5}
\log |d_n|_v \leq Cn M.
\end{equation}
We may now bound 
$$\left|\left(\vphi(z) - \vphi(\beta) \right)  - d_k (z-\beta)^k \right| \leq \sum_{i=k+1}^{\infty} \left|d_i (z-\beta)^i \right|.$$
Now let
$$\log |z-\beta|_v = -C_1 M$$
then
$$|d_i (z-\beta)^i| \leq e^{-i (C_1 - C) M}.$$
Then summing up from $i=k+1$ to $\infty$ gives an upper bound 
$$\sum_{i=k+1}^{\infty} \left|d_i (z-\beta)^i \right| \leq 2 e^{-i(C_1 - C)M}.$$
Now let's assume that $C_1 \geq 3d C$, so that $i(C_1 - C) \geq (k+ \frac{1}{d})C_1$ as $i \geq k+1$ and $k \leq 2d$. Then we obtain 
\begin{equation} \label{eq: SuperAttracting4}
\left|\vphi(z) - \vphi(\beta)| - d_k |z-\beta|^k \right| \leq 2e^{-(k + \frac{1}{d})C_1 M}.
\end{equation}
By \eqref{eq: SuperAttracting5} and \eqref{eq: SuperAttracting6}, we have 
$$e^{(-kC_1 - kC)M} \leq d_k|z-\beta|^k \leq e^{(-kC_1 + Ck)M}.$$
We now assume that $C_1$ is large enough so that
\begin{equation} \label{eq: SuperAttracting7}
e^{-(k+\frac{1}{d})C_1 M} \leq \frac{1}{2} e^{M(-kC_1 - kC)} \leq \frac{1}{2} d_k |z-\beta|^k.
\end{equation}
It follows from \eqref{eq: SuperAttracting4}
$$ \frac{1}{2} |z-\beta|^k \leq |\vphi(z) - \vphi(\beta)| \leq \frac{3}{2} d_k |z-\beta|^k$$
and so
$$\left|\log |\vphi(z) - \vphi(\beta)|_v ^{-1} - k \log |z-\beta|^{-1}_v \right| \leq \log |d_k|_v + \log 2$$
which by \eqref{eq: SuperAttracting5} gives us the $C_2$ we want as desired, where we take $C_1$ to be a constant larger than $3dC$ so that \eqref{eq: SuperAttracting7} holds. 
\end{proof}

Now if say $\beta$ was fixed of multiplicity $e \geq 2$, then if $C_1$ was sufficiently large, the hypothesis \eqref{eq: SuperAttracting2} continues to hold as we iterate and replace $z$ with $\vphi(z),\vphi^2(z),\ldots$ and we obtain an accurate control. However if $\beta$ was part of a superattracting cycle instead, say with distinct points $\beta,\vphi(\beta),\ldots,\vphi^{m-1}(\beta)$, then iterating the distance of $\vphi^i(z)$ to $\vphi^i(\beta)$ may decrease as $i$ goes from $0$ to $m-1$. The next proposition controls how bad it can be. 

\begin{proposition} \label{DerivativeBound3}
Let $K$ be a number field and $\vphi$ a rational map defined over $K$ of degree $d \geq 2$. Let $\beta$ be part of a superattracting cycle for $\vphi$ with period $m$. Then for any place $v \in M_K$, there exists constants $C_1 = C_1 (d) > 0, C_2 = C_2(d) > 0$, both depending only on the degree $d$, with $C_1 \geq C_2$, such that if $z \in \bb{C}_v$ satisfies 
\begin{equation} \label{eq: SuperAttracting2}
\log |z-\beta|_v \leq -(m+1)C_1[K:\bb{Q}](h(\vphi) + h(\beta)+1),
\end{equation}
then
$$\left| \log |\vphi^m(z)-\beta|_v - e \log |z-\beta|_v \right| \leq emC_2 [K:\bb{Q}](h(\vphi) + h(\beta) + 1)$$
where $e$ is the order of vanishing of $\beta$ for the equation $\vphi^m(z) = \beta$. 
\end{proposition}

\begin{proof}
Let $e_i$ be the multiplicity of $\vphi$ at $\vphi^{i-1}(\beta)$ where $\vphi^{0}(\beta) = \beta$. Then $\prod_{i=1}^{m} e_i = e \geq 2$. We will apply Proposition \ref{DerivativeBound2} with $\h_{\vphi}(\beta)$ replacing $h(\beta)$, which we may do so because of Proposition \ref{Heights2}. Let $C_1,C_2$ be the constants from Proposition \ref{DerivativeBound2}, where we will assume that $C_1 \geq C_2$. Let $M = [K:\bb{Q}](h(\vphi) +1)$, where we omit $\h_{\vphi}(\beta)$ as it is $0$. Assume that 
$$\log |z-\beta|_v \leq - (m+1)C_1 M.$$
Then we have 
$$|\log |\vphi(z) - \vphi(\beta)|_v - e_1 \log |z-\beta|_v| \leq C_2 M.$$
and in particular 
$$\log |\vphi(z) - \vphi(\beta)|_v \leq -mC_1 M.$$
This allows us to apply Proposition \ref{DerivativeBound2} again. This gives us 
$$\left| \log |\vphi^2(z) - \vphi^2(\beta)|_v - e_1 e_2 \log |z-\beta|_v \right| \leq e_2 C_2  + C_2M.$$
Repeating this $m-1$ more times, we get
$$\left|\log |\vphi^m(z) - \vphi^m(\beta)|_v - \left( \prod_{i=1}^{m} e_i \right) \log |z-\beta|_v \right| \leq \left(\sum_{i=2}^{m} \left(\prod_{j=i}^{m} e_j \right) C_2 M \right).$$
Since $\prod_{i=1}^{m} e_i = e$, we may bound this by 
$em C_2M$ as desired.

\end{proof}

We now begin the proof of Theorem \ref{TheoremReduced1} when $\beta$ is part of a superattracting cycle. Again, we keep the following assumptions: 

\begin{enumerate}
\item We may assume that $\h_{\vphi}(\alpha) \geq h(\vphi) + 1$.

\item We may assume that 
$$\left(1+ \frac{1}{100} \right) d^n \h_{\vphi}(\alpha) \geq h(\vphi^n(\alpha)) \geq \left(1 - \frac{1}{100} \right) d^n \h_{\vphi}(\alpha)$$
for all $n \geq 1$. 

\item We may assume that $h(\alpha) \geq h(\beta)$.

\item We may assume that $\infty \not \in \{\alpha,\vphi(\alpha),\ldots\}$. 

\item We may assume that $\infty \not \in \vphi^{-i}(\beta)$ for all $1 \leq i \leq N$ where $d^{N-1} \leq 56 \cdot 2^{2d-2} |S|$.
\end{enumerate}

Let $m$ be the period of $\beta$. We first show that if $m$ is large enough, then the arguments of Section \ref{sec: NonSuperAttracting} apply.

\begin{proposition} \label{SuperAttractingBound1}
We may assume that $m \leq N$ where $N$ is the largest positive integer such that 
$$d^{N-1} \leq 56 \cdot 2^{2d-2} |S|.$$
\end{proposition}

\begin{proof}
Let's assume that $m \geq N$. Then we can apply the argument in Section \ref{sec: NonSuperAttracting}. Indeed, the only property about $\beta$ being non-superattracting that we used is  
$$e_{\vphi^n}(\beta') \leq 2^{2d-2}$$
for all $\beta' \in \vphi^{-n}(\beta)$ and for all $n \geq 1$. But in our argument for non-superattracting $\beta$, we only used this property for $1 \leq n \leq N$. If our period $m$ is $> N$, then it follows that the same multiplicity bound still holds for $n \leq N$ as for $\beta' \in \vphi^{-n}(\beta)$, no critical point appears twice in the forward orbit 
$$\{\beta', \vphi(\beta'),\ldots, \vphi^n(\beta')\}.$$
Hence the argument in Section \ref{sec: NonSuperAttracting} still applies in this case and we may assume that $m \leq N$. 
\end{proof}

Now that we have an upper bound on $m$, we can apply Proposition \ref{DerivativeBound3}. Again let $N_S$ be the set of $n$'s for which $\vphi^n(\alpha)$ is $S$-integral relative to $\beta$. For $n \in N_S$, let $Y_n$ be the subset of $v \in S$ for which 
$$\log^+|\vphi^n(\alpha) - \beta|^{-1}_v \geq 2m C_1 [K:\bb{Q}] (h(\vphi) + h(\beta)+1)$$
where $C_1$ is the constant from Proposition \ref{DerivativeBound3}.

\begin{proposition} \label{SuperAttractingBound2}
Let $n' > n$ with $m \mid n' - n$, then 
$$Y_{n} \subseteq Y_{n'}.$$
\end{proposition}

\begin{proof}
It suffices to apply Proposition \ref{DerivativeBound3}. Let $C_1,C_2$ be the two constants from Proposition \ref{DerivativeBound3}. For $v \in Y_n$, since 
$$\log^+|\vphi^n(\alpha)-\beta|^{-1}_v \geq 2m C_1 [K:\bb{Q}](h(\vphi)+ h(\beta)+1),$$
it follows that 
$$\log^+|\vphi^{n+m}(\alpha) - \beta|_v^{-1} \geq (2emC_1 - emC_2)[K:\bb{Q}](h(\vphi)+h(\beta)+1).$$
Since $C_1 \geq C_2$, this is 
$$\geq emC_1[K:\bb{Q}](h(\vphi) + h(\beta)+1)$$
and so $v \in Y_{n+m}$. Thus $v \in Y_{n'}$ if $m \mid n' - n$ as desired.
\end{proof}

On the other hand, we can show that if $n' - n$ is sufficiently large, then $Y_n$ must be a proper subset of $Y_{n'}$. This will then lead us to a contradiction since we cannot have a strictly increasing chain of $Y_n$'s of length $\geq |S|+2$ as $Y_n \subseteq S$. 

\begin{proposition} \label{SuperAttractingBound3}
There exists $n_0 = n_0(d) > 0$ and $C = C(d) > 0$ such that if $n \geq n_0 \log |S|$ and $n' - n \geq C$ with $m \mid n'-n$, then
$$Y_{n} \text{ is a proper subset of } Y_{n'}.$$
\end{proposition}

\begin{proof}
By Proposition \ref{SuperAttractingBound2}, we know that $Y_n \subseteq Y_{n'}$. Hence we may assume for a contradiction that $Y_n = Y_{n'}$. We write 
$$h(\vphi^{n'}(\alpha) - \beta) = \sum_{v \in Y_{n'}} \log^+|\vphi^{n'}(\alpha) - \beta|_v ^{-1} + \sum_{v \in S \setminus Y_{n'}} \log^+|\vphi^{n'}(\alpha) - \beta|^{-1}_v.$$
For $v \in S \setminus Y_{n'}$, we have the upper bound 
$$\log^+|\vphi^{n'}(\alpha) - \beta|_v^{-1} \leq 2m C_1[K:\bb{Q}](h(\vphi) + h(\beta)+1).$$
Now let $e$ be the multiplicity of $\vphi^m$ at $\beta$. Note that $e \leq (d-1)^m$ since $\beta$ is non-exceptional. Let $\ell = \frac{n'-n}{m}$. For $v \in Y_{n'}$, applying Proposition \ref{DerivativeBound3} $\ell$ many times gives us 
$$\log^+|\vphi^{n'}(\alpha) - \beta|_v^{-1} \leq e^{\ell} \log^+|\vphi^n(\alpha) - \beta|_v^{-1} + (e^{\ell} + (e)^{\ell-1} + \cdots + e)m C_2 [K:\bb{Q}](h(\vphi) + h(\beta)+1).$$
We may bound $\sum_{i=1}^{\ell} (e)^{i}$ by $2 e^{\ell}$ and so we obtain 
\begin{equation} \label{eq: SuperAttracting3}
\sum_{v \in S} N_v \log^+|\vphi^{n'}(\alpha) - \beta|^{-1}_v 
\end{equation}
$$\leq e^{\ell} \sum_{v \in S} \log^+|\vphi^n(\alpha) - \beta|^{-1}_v   + |S| (2m e^{\ell}) C_2 [K:\bb{Q}](h(\beta) + h(\vphi) +1).$$
On the other hand, 
$$\sum_{v \in S} N_v \log^+|\vphi^{n'}(\alpha) - \beta| = h(\vphi^{n'}(\alpha) - \beta) \geq d^{n'-1} h(\alpha)$$
and 
$$e^{\ell} \sum_{v \in S} \log^+|\vphi^n(\alpha) - \beta|^{-1}_v = e^{\ell} h(\vphi^n(\alpha) - \beta) \leq e^{\ell} d^{n+1} h(\alpha).$$
We thus obtain from $\eqref{eq: SuperAttracting3}$ the inequality 
$$\left(d^{n'-1} - e^{\ell} d^{n+1} \right) h(\alpha) \leq |S|(2 m e^{\ell + 1}) C_2 [K:\bb{Q}] (h(\beta) + h(\vphi) +1).$$
Using $e \leq (d-1)^m$, we may write this as 
$$\left(d^{n'-1} - (d-1)^{n'-n} d^{n+1} \right) h(\alpha) \leq |S| (2 (d-1)^{n'-n} m) C_2 [K:\bb{Q}] (h(\beta) + h(\vphi) +1).$$
We now note that if $n'-n \geq C$ for $C = C(d) > 0$, we have 
$$(d-1)^{n'-n} d^{n+1} \leq \frac{1}{2} d^{n'-1}$$
and so we get 
$$\frac{1}{2} d^{n'-1} h(\alpha) \leq  2 |S| d^{n'-n} m C_2 [K:\bb{Q}](h(\beta) + h(\vphi) + 1)$$
where we replaced $(d-1)^{n'-n} $ by $d^{n'-n} $. Finally this is a contradiction for $n' \geq n_0 \log |S|$ where $n_0$ is some large constant depending only on $d$ as we may bound $[K:\bb{Q}]$ from above by $2|S|$ and $d \geq 2$. This finishes the proof.
\end{proof}

We can now conclude the proof of Theorem \ref{TheoremReduced1} in the case where $\beta$ is superattracting. 

\begin{proof}[Proof of Theorem \ref{TheoremReduced1} when $\beta$ is superattracting] Again let $N_S$ be the set of $n$'s for which $\vphi^n(\alpha)$ is $S$-integral relative to $\beta$. First, if $m$ is the period of $\beta$, then we have 
$$m \leq 2d + \log |S| + \log (56d).$$
Let $n_0,C$ be the constants in Proposition \ref{SuperAttractingBound3}. Consider $n$'s sufficiently large for which $n \geq n_0 \log |S|$ where $n_0$ is the constant in Proposition \ref{SuperAttractingBound3}. Then if we had more than $m C(|S|+1)$ such $n$'s, we must be able to find $|S|+2$ many of them, say $n_1,\ldots,n_{|S|+2}$ such that $m \mid n_{i+1} - n_i$ and $n_{i+1} - n_i \geq C$. Applying Proposition \ref{SuperAttractingBound3} tells us that we have the containments
$$Y_{n_1} \subset Y_{n_2} \subset \cdots \subset Y_{n_{|S|+2}}$$
where each containment is proper. However each $Y_{n_i}$ is a subset of $S$ and so can have at most $|S|$ many elements. This is a contradiction as desired and so we can have at most $mC(|S|+1)$ many elements which is of the form $c |S|(\log |S|)$ where $c = c(d) > 0$ depends only on $d$ as desired.
\end{proof}

\section{Uniformity Statements} \label{sec: Uniform}
We will prove Theorem \ref{IntroTheorem4} in this section. We first show that the dependency of the upper bound in Theorem \ref{IntroTheorem3} on $h(\vphi)/\h_{\vphi}(\alpha)$ is necessary.

\begin{proposition}
Let $S = \{\infty\}$ where $\infty$ is the archimedean place of $\bb{Q}$. Then there exists a sequence of rational maps $(\vphi_m)$ of fixed degree $d \geq 2$ defined over $\bb{Q}$, with potentially good reduction at all finite places $v$, and sequence of points $(\alpha_m), (\beta_m)$ of $\bb{P}^1(\bb{Q})$ such that

\begin{enumerate}
\item $h(\vphi_m)/h_{\vphi_m}(\alpha_m) \to \infty,$
\item There  exists a constant $C > 0$ such that
$$\{n \geq 0 \mid \vphi_m^n(\alpha_m) \text{ is S-integral relative to } \beta_m\} \geq \log^+_d \left( \frac{h(\vphi_m) + 1}{h_{\vphi_m}(\alpha_m)} \right) - C.$$
\end{enumerate}
\end{proposition}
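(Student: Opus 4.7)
The plan is to construct $\vphi_m$ as a $\bb{Q}$-conjugate of the power map $z^d$ by a Möbius transformation whose denominator is chosen to exactly cancel the factors $2^{d^n} - 1$ for $n \leq m$. Concretely, let
$$D_m := \prod_{j=1}^{m} (2^{d^j} - 1) \in \bb{Z}_{>0}, \qquad \mu_m(z) := \frac{z - 1}{D_m}, \qquad \mu_m^{-1}(z) = D_m z + 1,$$
and put
$$\vphi_m(z) := \mu_m \circ z^d \circ \mu_m^{-1}(z) = \frac{(D_m z + 1)^d - 1}{D_m} = \sum_{k=1}^{d} \binom{d}{k} D_m^{k-1} z^k,$$
$\alpha_m := \mu_m(2) = 1/D_m$ and $\beta_m := \mu_m(1) = 0$. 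Then $\vphi_m \in \bb{Z}[z]$ is a degree-$d$ polynomial and, being a $\bb{Q}$-conjugate of $z^d$, has potentially good reduction at every finite place. The point $\alpha_m$ is non-preperiodic because $\mu_m^{-1}(\alpha_m) = 2$ is non-preperiodic for $z^d$, and $\beta_m = 0$ is non-exceptional since $\vphi_m(z) = 0$ amounts to $(D_m z + 1)^d = 1$, which has $d \geq 2$ distinct roots $(\zeta - 1)/D_m$ over $\ovl{\bb{Q}}$.

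For condition (1), conjugation-invariance of the canonical height yields $\widehat{h}_{\vphi_m}(\alpha_m) = \widehat{h}_{z^d}(2) = h(2) = \log 2$, independent of $m$. The largest coefficient of $\vphi_m$ is $D_m^{d-1}$, so $h(\vphi_m) = (d-1) \log D_m$, and since $\log D_m = \sum_{j=1}^{m} \log(2^{d^j} - 1) \sim \frac{d^{m+1}}{d-1} \log 2$, the ratio $h(\vphi_m)/\widehat{h}_{\vphi_m}(\alpha_m) \to \infty$, and $\log^+_d\!\left((h(\vphi_m)+1)/\widehat{h}_{\vphi_m}(\alpha_m)\right) = m + 1 + O(1)$. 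For condition (2), for each $0 \leq n \leq m$,
$$\vphi_m^n(\alpha_m) = \mu_m(2^{d^n}) = \frac{2^{d^n} - 1}{D_m}.$$
Because $2^{d^n} - 1$ divides $D_m$ for $n \leq m$, the right side collapses to $1/k_n$ in lowest terms with $k_n := D_m/(2^{d^n} - 1) \in \bb{Z}_{>0}$. Writing $\beta_m = 0/1$ in lowest terms, the cross-product $1 \cdot 1 - k_n \cdot 0 = 1$ is a unit, so $\vphi_m^n(\alpha_m)$ is $S$-integral relative to $\beta_m$ for $S = \{\infty\}$. This yields $m+1$ many $S$-integral iterates, so
$$\left|\{n \geq 0 : \vphi_m^n(\alpha_m) \text{ is } S\text{-integral rel.\ to } \beta_m\}\right| \geq m + 1 \geq \log^+_d\!\left(\frac{h(\vphi_m)+1}{\widehat{h}_{\vphi_m}(\alpha_m)}\right) - C$$
for a universal constant $C > 0$.

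The arithmetic heart of the construction is the divisibility $(2^{d^n} - 1) \mid D_m$ for $n \leq m$, which forces cancellation in the iterate $\vphi_m^n(\alpha_m) = (2^{d^n} - 1)/D_m$ and reduces each such iterate to a Farey neighbor $1/k_n$ of $0 = \beta_m$. Once the Möbius conjugation $\mu_m$ is chosen with this identity in mind, no genuine obstacle arises: all remaining verifications (potential good reduction, non-exceptionality of $0$, invariance of canonical height, and the height computation $h(\vphi_m) = (d-1)\log D_m$) are routine.
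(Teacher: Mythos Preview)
Your proof is correct and follows essentially the same strategy as the paper's: conjugate a fixed map with everywhere good reduction by an affine transformation whose scaling factor is a product of orbit-related integers, so that the first $m$ iterates become $S$-integral for $S=\{\infty\}$. The paper carries this out with $\vphi(z)=z^2+1$, base point $0$, conjugation $\psi_m(z)=a_m z+1$ where $a_m=\prod_{i=1}^m \vphi^i(0)$, and takes $\alpha_m=\beta_m=1$; you instead use the power map $z^d$, base point $2$, conjugation $\mu_m(z)=(z-1)/D_m$ with $D_m=\prod_{j=1}^m(2^{d^j}-1)$, and take $\alpha_m=1/D_m$, $\beta_m=0$.

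Your variant is slightly cleaner in two respects: the orbit $2^{d^n}$ and the canonical height $\widehat h_{\vphi_m}(\alpha_m)=\widehat h_{z^d}(2)=\log 2$ are completely explicit, and your construction works verbatim for every $d\ge 2$ (the paper writes out only $d=2$). The paper's choice has the minor feature that $\alpha_m=\beta_m$, so the integrality is of an orbit relative to its own starting point. Either way, the essential content---that conjugating by a large integer scaling drives $h(\vphi_m)$ to infinity while $\widehat h_{\vphi_m}(\alpha_m)$ stays bounded and the first $m$ iterates are forced to be $\{\infty\}$-integral---is the same.
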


\begin{proof}
Let $\vphi(z) = z^2+1$. Then $0$ is not preperiodic. We will consider affine transformations $\psi_m = a_m z + 1$ with $a_m \in \bb{Q}$ and set 
$$\vphi_m = \psi_m \circ \vphi \circ  \psi_m^{-1}, \quad \alpha_m = \beta_m = \psi_m(0) = 1.$$
Then $h_{\vphi_m}(\alpha_m) = \h_{\vphi}(0)$ remains constant and $\vphi_m^n(\alpha_m) = \psi_m( \vphi^n(0))$. Now consider the first $m$ points on the orbit $\vphi(0),\vphi^2(0),\ldots,\vphi^m(0)$ and choose $a_m$ such that $|a_m \vphi^i(0)|_v \geq 1$ for all $1 \leq i \leq m$ and finite places $v$. We claim that $\vphi_m^i(\alpha_m)$ is $S$-integral relative to $\beta_m$ for all $1 \leq i \leq m$. Indeed, since $\beta_m = 1$, we have that $\vphi_m^i(\alpha_m)$ is not $S$-integral relative to $\beta_m$ only if 
$$|\vphi_m^i(\alpha_m) - 1|_v < 1 \implies |\psi_m(\vphi^i(0)) - 1|_v < 1 \implies |a_m \vphi^i(0)|_v < 1$$
for some finite place $v$, which is not the case due to our choice of $a_m$. It suffices to estimate how large $h(a_m)$ is. We let $a_m$ be the reciprocal of the product of the numerators of each $\vphi^i(0)$ for $1 \leq i \leq m$. This gives us
$$h(a_m) \leq \sum_{i=1}^{m} h(\vphi^i(0)).$$
By Proposition \ref{Heights2}, there exists $c > 0$ such that
$$h(\vphi^i(0)) \leq 2^i \h_{\vphi}(0) + c \text{ for all } i \in \bb{N}.$$
Hence
$$h(a_m) \leq 2^{m+1} \h_{\vphi}(0) + mc.$$
Proposition 5(c) of \cite{HS11} implies that 
$$h(\vphi_m) \leq h(\vphi) + 4(2^{m+1} \h_{\vphi}(0) + \log 8 + mc)$$
and setting $d = 2$, we get
$$\log_d^+ \left( \frac{h(\vphi_m) + 1}{\h_{\vphi}(0)} \right) \leq \log_2^+ \left( \frac{ 4(2^{m+1} \h_{\vphi}(0) + \log 8 + mc)+1}{\h_{\vphi}(0)} \right) \leq m+C $$
for some constant $C > 0$ as desired.

\end{proof}

The fact that we cannot have a bound on the number of $S$-integral points in orbits that is uniform over all rational maps of degree $\leq d$ has already been noted in \cite{KLS15}. They instead consider the question of $S$-units in an orbit, and conjecture that there a uniform bound should exist for all rational maps of degree $\leq d$. We now prove the existence of a uniform bound for all polynomials with bounded number of places of bad reduction. 
\par 
We first state a theorem of Looper which bounds minimum value of $\h_{f}(\alpha)$ in terms of $h(f)$ uniformly for polynomials $f$ with bounded places of bad reduction. This was proven first for unicritical maps by Ingram \cite{Ing09b} and has recently been generalized to rational maps by the author \cite{Yap25b}. For $\mathbf{c} = (c_1,\ldots,c_{d-1})$, set
$$f_{\bf{c}} = \frac{1}{d} z^d - \frac{1}{d-1} (c_1 + \cdots + c_{d-1})z^{d-1} + \cdots + (-1)^{d-1}c_1 c_2 \cdots c_{d-1}z.$$

\begin{theorem} \cite[Theorem 1.2]{Loo19} \label{Looper1}
Suppose $K$ is a number field and $\mathbf{c} \in K^{d-1}$ with $f_{\bf{c}}$ having at most $s$ places of bad reduction. Then there exists $\kappa_1, \kappa_2 > 0$, depending only on $d,s,[K:\bb{Q}]$ such that
$$h_{f_{\bf{c}}}(\alpha) \geq \kappa_1 h(f_{\bf{c}}) + \kappa_2$$
for all $\alpha \in K$ that is not preperiodic. 
\end{theorem}

Since every polynomial $f \in K[z]$ is conjugate to $f_{\bf{c}}$ for some $\bf{c}$, with the same number of places of bad reduction, we can conjugate $f$ to $f_{\bf{c}}$ and apply Theorem \ref{IntroTheorem3} to it. However, conjugating might change the $S$-integrality of our points. To overcome this, we use Hsia--Silverman's notion of quasi-$(S,\epsilon)$-integrality. A point which is not quasi-$(S,\epsilon)$-integral can be thought of being at least $(1-\epsilon)$-far away from being $S$-integral and in particular, gives us some lee way when conjugating. 
\par 
We now recall the definition of a quasi-$(S,\epsilon)$-integral point. Fix an $\epsilon$ with $0 \leq \epsilon \leq 1$. Let $S$ be a finite set of places of $K$ containing all archimedean places. We say that $x \in K$ is quasi-$(S,\epsilon)$-integral if
$$\sum_{v \in S} N_v \log^+|x|_v \geq \epsilon h(x).$$
When $\epsilon = 1$, we have
$$\sum_{v \in S} N_v \log^+|x|_v = h(x)$$
which implies that $\log^+|x|_v = 0$ for all $v \not \in S$. In particular we recover the definition of $S$-integral points. If $\frac{1}{\vphi^n(\alpha)-\beta}$ is quasi-$(S,\epsilon)$-integral, we say that $\vphi^n(\alpha)$ is quasi-$(S,\epsilon)$-integral relative to $\beta$.  
\par 
For a rational map $\vphi: \bb{P}^1 \to \bb{P}^1$ of degree $d \geq 2$ defined over $K$ and $\alpha,\beta \in \bb{P}^1(K)$, we set 
$$\Gamma_{\vphi,S}(\alpha,\beta,\epsilon) = \{ n \geq 0 \mid (\vphi^n(\alpha) - \beta)^{-1} \text{ is quasi-}(S,\epsilon)\text{-integral}.\}$$
\par 
The following theorem is proven in \cite{HS11}, with one modification to remove the dependency on $\h_{\vphi}(\beta)$. 

\begin{theorem} \label{QuasiSIntegral1}
Fix an $\epsilon$ with $0 \leq \epsilon < 1$. Assume that $\alpha$ is not preperiodic and $\beta$ is non-exceptional. Then there exists $\gamma = \gamma([K:\bb{Q}],d,\epsilon)$ such that
$$|\Gamma_{\vphi,S}(\alpha,\beta,\epsilon)| \leq (\gamma) \cdot 4^{|S|} + \log^+_d \left(\frac{h(\vphi)+1}{\h_{\vphi}(\alpha)}\right).$$
\end{theorem}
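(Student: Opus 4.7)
The plan is to adapt the proof of \cite[Theorem 11]{HS11} essentially verbatim, substituting our distribution estimate Proposition \ref{Distribution5} for Hsia--Silverman's quantitative inverse function theorem at the step where one descends to the $N$-th preimage. The key point is that Proposition \ref{Distribution5} has error controlled by $h(\vphi)$, $\log^+|\beta|_v$, and $\sum \log^+|\beta_i|_v$ only, with no appearance of $\canh(\beta)$; this is precisely what removes the $\canh(\beta)$ dependence from the final constant.

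First I would perform the standard reductions. By iterating $\alpha$ forward, invoking Lemma \ref{Reduction1}, we may assume $\canh(\alpha) \geq h(\vphi)+1$ and $h(\vphi^n(\alpha)) \geq (d^n/4)h(\alpha)$ at the expense of the $\log_d^+((h(\vphi)+1)/\canh(\alpha))$ term in the final bound. A routine analogue of Lemma \ref{Reduction2} adapted to the quasi-$(S,\epsilon)$-integral condition (replace the bound $h(\vphi^n(\alpha)-\beta)$ with $\epsilon h(\vphi^n(\alpha)-\beta)/|S|$ on some $v$ via pigeonhole) further reduces to the case $h(\alpha) \geq h(\beta)$, adding only $O_\epsilon(|S|)$ exceptional indices.

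Fix $\delta = \delta(\epsilon) > 0$ and choose $N = N(d,\epsilon)$ with $d^N \geq (2+\delta)\, 2^{2d-2}/\epsilon$. For each $n \in \Gamma_{\vphi,S}(\alpha,\beta,\epsilon)$ with $n$ larger than some $n_0(d,|S|,\epsilon)$, the quasi-integrality inequality $\sum_{v \in S} N_v \log^+|\vphi^n(\alpha)-\beta|_v^{-1} \geq \epsilon h(\vphi^n(\alpha)-\beta)$ together with Proposition \ref{Distribution5} (whose error, bounded using Proposition \ref{Heights2} and our assumption $h(\alpha) \geq h(\beta)$, is absorbed) gives
\begin{equation*}
\sum_{v \in S} \sum_{\beta_i \in \vphi^{-N}(\beta)} N_v \log^+|\vphi^{n-N}(\alpha)-\beta_i|_v^{-1} \geq \tfrac{\epsilon}{2} h(\vphi^n(\alpha)).
\end{equation*}
For each $v \in S$, let $\beta_{v,N} \in \vphi^{-N}(\beta)$ maximize the inner quantity, with multiplicity $e_{v,N} \leq 2^{2d-2}$. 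The argument of Lemma \ref{SIntegralBound2} replaces the inner sum by a single term up to negligible error, and using $h(\vphi^{n-N}(\alpha)) \leq 4 d^{-N} h(\vphi^n(\alpha))$ together with the choice of $N$ we obtain
\begin{equation*}
\sum_{v \in S} N_v \log^+|\vphi^{n-N}(\alpha)-\beta_{v,N}|_v^{-1} \geq (2+\delta)\, h(\vphi^{n-N}(\alpha)),
\end{equation*}
which is exactly the hypothesis of Hsia--Silverman's quantitative Roth's theorem for a finite set of places \cite[Theorem 10]{HS11}.

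Finally, I would apply \cite[Theorem 10]{HS11}: the profile $(\beta_{v,N})_{v \in S} \in \prod_{v \in S}\vphi^{-N}(\beta)$ takes at most $(d^N)^{|S|}$ values, $d^N$ depends only on $d,\epsilon$, and for each fixed profile the number of exceptional indices $n$ is bounded by Hsia--Silverman's theorem; summing yields $\gamma \cdot 4^{|S|}$ with $\gamma=\gamma([K:\bb{Q}],d,\epsilon)$, after absorbing $n_0$ and the $O_\epsilon(|S|)$ count from the preliminary reductions. The main obstacle is the bookkeeping in the descent step: one must verify that every error term produced by Proposition \ref{Distribution5}, by the max-term reduction, and by the comparison $h(\vphi^n(\alpha)) \approx d^N h(\vphi^{n-N}(\alpha))$ is genuinely $o(h(\vphi^{n-N}(\alpha)))$ once $n \geq n_0(d,|S|,\epsilon)$, with constants depending only on $d,[K:\bb{Q}],\epsilon$ and not on $\canh(\beta)$.
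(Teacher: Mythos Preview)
Your approach would ultimately work, but it is far more elaborate than the paper's and slightly misdiagnoses where the improvement comes from. The paper does not redo the Hsia--Silverman argument at all: it simply invokes \cite[Theorem~11(b)]{HS11} as a black box, which already yields
\[
|\Gamma_{\vphi,S}(\alpha,\beta,\epsilon)| \leq \gamma\, 4^{|S|} + \log^+_d \left(\frac{h(\vphi) + \canh(\beta)}{\canh(\alpha)}\right),
\]
and then observes that the analogues of Lemmas~\ref{Reduction1} and~\ref{Reduction2} for quasi-$(S,\epsilon)$-integral points allow one to assume $\canh(\alpha) \geq \canh(\beta)$, at which point the $\canh(\beta)$ in the numerator is absorbed into the $\gamma\,4^{|S|}$ term. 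That is the entire proof.

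Your attribution of the removal of $\canh(\beta)$ to Proposition~\ref{Distribution5} is misleading. The error in Proposition~\ref{Distribution5} still involves $\log^+|\beta|_v$ and $\sum_{\beta_i}\log^+|\beta_i|_v$, which after summing over $v\in S$ and applying (\ref{eq: HeightBoundPreimage}) are controlled by $h(\beta)$, hence by $\canh(\beta)$ up to $O(dh(\vphi))$. What actually eliminates the $\canh(\beta)$ dependence is Reduction~2, which you do apply---so your argument goes through, but the distribution estimate is doing no work on this particular point that Hsia--Silverman's own Proposition~7 would not already do. The upshot is that you have reproved \cite[Theorem~11]{HS11} with a different (and for this purpose unnecessary) intermediate lemma, when citing it and appending the two-line reduction suffices.
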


\begin{proof}
Theorem 11(b) of \cite{HS11} tells us that
$$|\Gamma_{\vphi,S}(\alpha,\beta,\epsilon)| \leq (\gamma) \cdot 4^{|S|} + \log^+_d \left(\frac{h(\vphi) + \h_{\vphi}(\beta)}{\h_{\vphi}(\alpha)}\right).$$
We now have to explain why we can remove $\h_{\vphi}(\beta)$ from the numerator and replace it with $1$. This follows from an analogous statement of Proposition \ref{Assumption5}, but for quasi-$(S,\epsilon)$-integral points. As the proof is very similar, we will not repeat it here. This allows us to assume that $\h_{\vphi}(\alpha) \geq \h_{\vphi}(\beta)$ and 
thus get rid of the term $\h_{\vphi}(\beta)$ in the numerator, giving us
$$|\Gamma_{\vphi,S}(\alpha,\beta,\epsilon)| \leq (\gamma) \cdot 4^{|S|} + \log^+_d \left(\frac{h(\vphi)+1}{\h_{\vphi}(\alpha)}\right)$$
as desired.
\end{proof}

Before we start our proof of Theorem \ref{IntroTheorem4}, we first analyze how replacing $(\alpha,0)$ by $(\psi(\alpha),\psi(0))$, where $\psi$ is an affine linear transformation, can affect non-quasi-$(S,\epsilon)$-integrality of $\alpha$. For $x \in K$, we will let 
$$h_S(x) = \sum_{v \not \in S} N_v \log^+|x|_v$$
for convenience. 
\begin{proposition} \label{Conjugacy1}
Assume that $\epsilon \leq \frac{1}{10}$. Let $\psi = az - b$ with $a,b \in K$ and $a \not = 0$. Let $\alpha_1,\alpha_2 \in K$ be two elements that are quasi-$(S,\epsilon)$-integral relative to $\frac{b}{a}$ and assume $h(\alpha_1) \geq h(\alpha_2) + 3 \log 2$. Then if $\psi(\alpha_1), \psi(\alpha_2)$ are both $S$-units, we must have
$$ 9 h(\alpha_1) \geq h\left(\frac{b}{a}\right) + h_S(a) \geq \frac{1}{2} h(\alpha_1).$$
\end{proposition}

\begin{proof}
We have $\psi(\alpha_i) = a \alpha_i - b$. Then being a $S$-unit implies that $\log |a \alpha_i-b|_v = 0$ for all $v \not \in S$. Hence for each $v \not \in S$ where $|\alpha_i - \frac{b}{a}|_v < 1$, we must have $|a|_v = |\alpha_i - \frac{b}{a}|^{-1}_v$.
Hence we have 
$$h_S(a) = \sum_{v \not \in S} N_v\log^+|a|_v = \sum_{v \not \in S} N_v \log^+ \left| \alpha_i - \frac{b}{a} \right|^{-1}_v.$$
As $\alpha_i$ is quasi-$(S,\epsilon)$-integral relative to $\frac{b}{a}$, we get 
\begin{equation} \label{eq: ConjugacyEqn1} 
h_S(a) = \sum_{v \not \in S} N_v \log^+ \left| \alpha_i - \frac{b}{a} \right|^{-1}_v \geq (1-\epsilon)h\left(\alpha_i - \frac{b}{a} \right) \geq \frac{9}{10} \left(h(\alpha_i) - h\left(\frac{b}{a} \right) - \log 2 \right).
\end{equation}
As $h(\alpha_1) \geq 3 \log 2$, we obtain  
$$h_S(a) + h\left(\frac{b}{a} \right) \geq \frac{1}{2} h(\alpha_1)$$
which gives our lower bound.
\par 
For the upper bound, as $v$ is non-archimedean, from $\left|\alpha_i - \frac{b}{a} \right|^{-1}_v = |a|_v$, we obtain 
$$\left|\alpha_i - \frac{b}{a} \right|_v = |a|^{-1}_v \implies |\alpha_1 - \alpha_2| \leq |a|^{-1}_v$$
and so
$$|a|_v \leq |\alpha_1 - \alpha_2|_v^{-1}.$$
Hence 
$$\sum_{v \not \in S} N_v \log^+|\alpha_1 - \alpha_2|^{-1}_v \geq \sum_{v \not \in S} N_v \log^+|a|_v = h_S(a).$$
Using \eqref{eq: ConjugacyEqn1} and using $h(x-y) \geq h(y) - h(x) - \log 2$ instead, we get
$$\sum_{v \not \in S} \log^+|\alpha_1 - \alpha_2|_v^{-1} \geq  h_S(a) + \frac{9}{10} \left( h\left(\frac{b}{a} \right) - h(\alpha_i) - \log 2 \right).$$
Since 
$$\sum_{v \not \in S} \log^+|\alpha_1 - \alpha_2|_v^{-1} \leq h(\alpha_1 - \alpha_2) \leq h(\alpha_1) + h(\alpha_2 ) + \log 2 \leq 2 h(\alpha_1),$$
we get 
$$3h(\alpha_1) \geq  h_S(a) + \frac{1}{2} h \left(\frac{b}{a} \right)$$
which gives us our upper bound.

\end{proof}

Proposition \ref{Conjugacy1} essentially tells us that if applying $\psi$ changes a non-quasi-$(S,\epsilon)$-integral point $\alpha$ to a $S$-unit, then there are constrains on $h(\alpha)$. 
\par 
We now prove our uniform bound of the number of $S$-units in our orbit for polynomials with at most $s$ places of bad reduction.

\begin{theorem} \label{UniformSIntegral1}
Let $\vphi(z) \in K[z]$ be a polynomial of degree $d$ with at most $s$ places of bad reduction. Assume that $\infty$ is the only exceptional point for $\vphi(z)$. Then there exists a constant $c_6 = c_6([K:\bb{Q}],d,s,|S|)$ such that
$$\{n \geq 0 \mid \vphi^n(\alpha) \text{ is a S-unit } \} \leq c_6.$$
\end{theorem}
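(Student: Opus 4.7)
The plan begins by reducing to a quasi-integrality statement. If $x\in K$ is an $S$-unit, then $|x|_v=1$ for every $v\notin S$, so the product formula gives $\sum_{v\in S}N_v\log^+|x^{-1}|_v=h(x^{-1})$, meaning that $x^{-1}$ is quasi-$(S,1)$-integral and therefore a fortiori quasi-$(S,1/2)$-integral. Thus the set of $n$ with $\vphi^n(\alpha)$ an $S$-unit is contained in $\Gamma_{\vphi,S}(\alpha,0,1/2)$. We may assume $\alpha$ is not preperiodic, otherwise its orbit is finite and the bound is trivial. Since $0$ is non-exceptional by hypothesis, Theorem \ref{QuasiSIntegral1} applies with $\beta=0$ and $\epsilon=1/2$ to yield
\[
\#\{n\geq 0:\vphi^n(\alpha)\text{ is an }S\text{-unit}\}\leq\gamma\,4^{|S|}+\log^+_d\frac{h(\vphi)+1}{\canh(\alpha)},
\]
with $\gamma=\gamma([K:\bb{Q}],d)$. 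The first term is already of the required form, so the task reduces to bounding the second term uniformly in terms of $[K:\bb{Q}],d,s$.

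For this I invoke Looper's Theorem \ref{CanonicalHeightBound1}. Since Looper's bound is phrased for polynomials in the normalized form $f_{\bf{c}}$, I first conjugate $\vphi$ to this form. Let $\beta_0\in\bar{K}$ be a fixed point of $\vphi$ (so $[K(\beta_0):K]\leq d$) and let $\alpha_0\in\bar{K}$ satisfy $\alpha_0^{d-1}=1/(d\,a_d)$, where $a_d$ is the leading coefficient of $\vphi$ (so $[K(\alpha_0):K]\leq d-1$); set $L:=K(\alpha_0,\beta_0)$ and $\psi(z):=\alpha_0 z+\beta_0$, with $[L:K]\leq d(d-1)$. Then $\tilde\vphi:=\psi^{-1}\vphi\psi$ lies in the $f_{\bf{c}}$-family, and a direct coefficient check shows that $\tilde\vphi$ has at most $s'=s'(s,d,[K:\bb{Q}])$ places of bad reduction over $L$. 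Looper's theorem then supplies constants $\kappa_1,\kappa_2>0$ depending only on $d,s,[K:\bb{Q}]$ with
\[
\canh_{\tilde\vphi}(\psi^{-1}(\alpha))\geq\kappa_1 h(\tilde\vphi)+\kappa_2,
\]
and the conjugation invariance of the canonical height identifies the left-hand side with $\canh_\vphi(\alpha)$.

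The main obstacle is promoting this to a lower bound of the form $\canh_\vphi(\alpha)\geq c(h(\vphi)+1)$, since a priori $h(\vphi)$ can exceed $h(\tilde\vphi)$ by an unbounded factor when $\psi$ has large height. The rescue comes from the bad-reduction hypothesis itself: because $\beta_0$ is a root of $\vphi(z)-z$ and $\alpha_0$ is determined by $a_d$, we obtain $h(\psi)=O_d(h(\vphi))$. Combining this with the Weil-versus-canonical comparison $|\canh_{\tilde\vphi}(x)-h(x)|\leq O_d(h(\tilde\vphi))$ from Proposition \ref{Heights2} (applied to $x=\psi^{-1}(\alpha)$, possibly after replacing $\alpha$ by a uniformly bounded number of iterates via Lemma \ref{Reduction1} to make the Weil height dominate lower-order errors), and carefully tracking how the bad-reduction constraint on $\vphi$ controls the passage between $h(\vphi)$ and $h(\tilde\vphi)$, one obtains the desired lower bound $\canh_\vphi(\alpha)\geq c(h(\vphi)+1)$ with $c$ depending only on $d,s,[K:\bb{Q}]$. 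Plugging back, the logarithmic term becomes $O_{d,s,[K:\bb{Q}]}(1)$, and combined with $\gamma\,4^{|S|}$ this yields the required uniform constant $c_6([K:\bb{Q}],d,s,|S|)$.
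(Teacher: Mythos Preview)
Your reduction to $\Gamma_{\vphi,S}(\alpha,0,1/2)$ and invocation of Theorem~\ref{QuasiSIntegral1} for $\vphi$ directly are fine, but the argument breaks at the last step: the claimed lower bound $\widehat{h}_{\vphi}(\alpha)\ge c\,(h(\vphi)+1)$ with $c=c(d,s,[K:\bb{Q}])$ is false. Take $K=\bb{Q}$, $\vphi_M(z)=(z-M)^d+(z-M)+M$ for a large integer $M$, and $\alpha=M+1$. Then $\vphi_M$ is a monic integer polynomial, hence has good reduction at every finite place ($s=0$), and $0$ is non-exceptional since $\vphi_M$ is not conjugate to a power map; yet $h(\vphi_M)$ grows like $d\log M$. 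The translation $z\mapsto z+M$ conjugates $\vphi_M$ to $z^d+z$, so $\widehat{h}_{\vphi_M}(M+1)=\widehat{h}_{z^d+z}(1)$, a fixed positive constant independent of $M$. Thus $(h(\vphi_M)+1)/\widehat{h}_{\vphi_M}(\alpha)\to\infty$ and the log term in your bound is unbounded. The bad-reduction hypothesis gives no control here: translation by a large integer lies in $\operatorname{PGL}_2(\bb{Z})$, so it preserves everywhere-good reduction while inflating $h(\vphi)$ arbitrarily and leaving $h(\tilde\vphi)$ bounded. Your observation $h(\psi)=O_d(h(\vphi))$ is correct but is only an \emph{upper} bound on $h(\psi)$, which cannot be converted into the lower bound on $h(\tilde\vphi)$ in terms of $h(\vphi)$ that you would need.

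This is precisely why the paper does not apply Theorem~\ref{QuasiSIntegral1} to $\vphi$ in the original coordinates. It first conjugates to $f_{\bf c}$ and applies the quasi-integrality bound there, so that the log term becomes $\log_d^+\big((h(f_{\bf c})+1)/\widehat{h}_{f_{\bf c}}(\psi^{-1}(\alpha))\big)$, which \emph{is} uniformly bounded by Looper's Theorem~\ref{CanonicalHeightBound1}. The price is that quasi-$(S,\epsilon)$-integrality of $f_{\bf c}^n(\psi^{-1}(\alpha))$ relative to $\psi^{-1}(0)$ is not the same as $\vphi^n(\alpha)$ being an $S$-unit, so the paper must separately bound the number of $n$ for which the latter holds but the former fails. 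That is done by an elementary height argument: writing $\vphi^n(\alpha)=a\big(f_{\bf c}^n(\alpha')-\beta'\big)$ with $\alpha'=\psi^{-1}(\alpha)$ and $\beta'=\psi^{-1}(0)$, the $S$-unit condition forces $|f_{\bf c}^n(\alpha')-\beta'|_v=|a|_v^{-1}$ for every $v\notin S$, so any two such indices share identical contributions outside $S$, and comparing heights yields a contradiction once the indices are sufficiently far apart.
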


\begin{proof}
Let $\psi$ be an affine linear map so that $\psi^{-1} \circ \vphi \circ \psi$ is of the form 
$$\vphi_c = \frac{1}{d} z^d + a_{d-1} z^d + \cdots + a_0.$$
Then $\vphi_c$ has at most $s$ places of bad reduction and we may apply Theorem \ref{Looper1} to obtain $\kappa_1,\kappa_2 > 0$, depending on $d,s,[K:\bb{Q}]$, so that
\begin{equation} \label{eq: LowerBound1}
\h_{\vphi_c}(z) \geq \kappa_1 h(f_c) + \kappa_2
\end{equation}
for all $z \in \bb{P}^1(K)$ which is not preperiodic for $\vphi_c$. Now let $\alpha' = \psi^{-1}(\alpha)$. We note that 
$$\vphi^n(\alpha)) = \psi(\vphi_c^n(\alpha'))$$
which allows us to apply Proposition \ref{Conjugacy1}. Using \eqref{eq: LowerBound1}, for some $n_0 = n_0([K:\bb{Q}],s,d)$, if $n \geq n_0$ we have 
$$\h_{\vphi_c}(\vphi_c^n(\alpha')) \geq C(h(\vphi_c)+1)$$
where $C = C(d) > 0$ is the constant from Proposition \ref{Heights2}. Increasing $n_0$, and replacing $\alpha$ with $\vphi^{n_0}(\alpha)$, we may assume that 
\begin{equation} \label{eq: Condition1}
h(\alpha') \geq 6 \log 2
\end{equation}
and 
\begin{equation} \label{eq: Condition2}
h(\vphi^{n+1}(\alpha')) \geq \frac{3}{2} h(\vphi^n(\alpha'))
\end{equation}
for all $n \geq 0$. We are now ready to prove our theorem. Let $N_S$ be the set of $n$'s for which $\vphi^n(\alpha)$ is a $S$-unit. Let $\psi = az-b$ and let $\beta = \frac{b}{a}$. Since $\beta$ is not exceptional, if we take $\epsilon = \frac{1}{10}$ then by Theorem \ref{QuasiSIntegral1}, there are only finitely many $n$'s, depending only on $M = M([K:\bb{Q}],d,|S|)$ for which $\vphi_c^n(\alpha')$ is quasi-$(S,\epsilon)$-integral relative to $\beta$. We denote this set by $M_S$.
\par 
We now consider $n \in N_S \setminus M_S$. Since $\vphi^n(\alpha) = \psi(\vphi_c^n(\alpha'))$, by Proposition \ref{Conjugacy1} and \eqref{eq: Condition1}, we have 
$$9h(\vphi_c^n(\alpha')) \geq h\left(\frac{b}{a} \right) + h_S(a) \geq \frac{1}{2} h(\vphi_c^n(\alpha')).$$
By \eqref{eq: Condition2}, if we have $n',n \in N_S \setminus M_S$, it must be that 
$$n' - n \leq \log_{3/2}(18).$$
Hence $|N_S \setminus M_S|$ is a finite set which finishes our proof. 
\end{proof}

\bibliography{bibfile}
\bibliographystyle{alpha}

\end{document}